\newtheorem{theorem}{\indent Theorem}[section]
\newtheorem{proposition}[theorem]{\indent Proposition}
\newtheorem{lemma}[theorem]{\indent Lemma}
\newtheorem{corollary}[theorem]{\indent Corollary}
\theoremstyle{definition}\newtheorem{definition}[theorem]{\indent Definition}
\theoremstyle{remark}\newtheorem{remark}[theorem]{\indent Remark}
\newcommand{\veps}{\varepsilon}
\newcommand{\vp}{\varphi}
\newcommand{\E}{\mathbb{E}}
\renewcommand{\Pr}{\mathbb{P}}
\newcommand{\ind}{\mathbbm{1}} % indicator
\title{Random cherry graphs}
\author{{ Tam\'as F. M\'ori, and  S\'andor Rokob}\\[1ex]
	 (ELTE E\"otv\"os Lor\'and University, Budapest, Hungary)
}
\date{4 July 2018}
\begin{document}

\maketitle

\begin{abstract}
Due to the popularity of randomly evolving graph processes, there exists a randomized 
version of many recursively defined graph models. This is also the case with the 
cherry tree, which was introduced by Buksz\'ar and Pr\'ekopa to improve Bonferroni 
type upper bounds on the probability of the union of random events. Here we consider
a substantially extended random analogue of that model, embedding it into a general 
time dependent branching process.
\end{abstract}

\noindent {\small \textit{Keywords}. Cherry tree; Crump--Mode--Jagers process; 
exponential growth; extinction.\\
\textit{2010 Mathematics Subject Classification.}   05C80, 60F15, 60J85. } 

\section{Introduction}

In \cite{BukszarPrekopa}, Buksz\'ar and Pr\'ekopa introduced the following recursively 
defined graph model, called cherry tree. Initially, there is only an adjacent pair of 
vertices, the only cherry tree on exactly two vertices. From a cherry tree one can 
obtain another cherry tree by adding a new vertex and two new edges connecting this 
new vertex to two already existing vertices. This new, length $2$ path between the two
already existing vertices called cherry. Note that, in spite of their name,  cherry trees 
are not trees in the usual graph theoretic sense, as they generally contain cycles.

Their purpose was to improve the Hunter--Worsley second order upper bound on the 
probability of union of random events (see e.g.  \cite{Worsley}). The 
main idea behind their reasoning was to extend the spanning tree in the Hunter--Worsley 
inequality to a cherry tree. In fact, the extension they used was the so called $t$-cherry 
tree, a particular type of cherry trees where the cherries are always added to 
adjacent old vertices.

Apart from its use for constructing probability bounds, the graph model itself was not further
studied. A few years later a randomized generalization of this model was introduced in 
\cite{Mori}, where random evolving $m$-ary trees were introduced
and examined. In the case $m=2$ this model reduces to a random $t$-cherry tree.

A slightly related model is analysed with martingale methods in \cite{BackhauszMori1}, 
\cite{BackhauszMori2}, and \cite{FazekasNoszalyPerecsenyi}. In the particular case where 
the parameters of the model are appropriately set ($p=1$), it also defines a kind of random 
evolving cherry tree.

Here we consider a more general version of the random $t$-cherry tree. For the sake 
of convenience, this generalized model will also be called cherry tree or cherry graph. 
The main direction on the generalization is twofold: firstly, an edge is allowed to grow 
randomly many cherries at the same time; secondly, the possibility of edge deletion
is introduced. This not only breaks the monotonicity of growth -- making the analyis much 
harder -- but results in a more realistic model. Moreover, the way of the deletion arises 
in a really natural way.

The paper is organized as follows. First we gave a precise definition of the generalized 
version of the random cherry tree. Then we introduce the continuous time version of the 
model, completed with a well known stochastic process, namely, the Crump--Mode--Jagers 
process, which then constitutes the backbone of the analysis hereafter. Using this 
continuous version, we establish several properties of the model, such as  the probability of 
extinction, the asymptotic number of vertices or edges, the evolution of the degree of a 
fixed vertex, and so on.

\section{Model}

In this section, we introduce the basic notations and define our model of interest. Before 
setting up the model we need some definitions.

\subsection{Discrete time}

Assume we have a graph with only one edge connecting two vertices. Now, 
add a new vertex to this graph. If it is connected to both endpoints of the given 
edge, these new edges (and the new middle vertex) are called \emph{a cherry 
(of the existing edge)}. Alternatively, if the new vertex is joined to only 
one randomly chosen endpoint of the existing edge,  the new edge (and 
the new vertex) is called \emph{a semi-cherry} or \emph{cherry stem (of the 
existing edge)}. 

Now, we have everything to describe the main object of our further examinations. 

\begin{definition}
\emph{The random cherry tree} is a graph process evolving in 
discrete time steps in the following way.
\begin{enumerate}
	\item Initially, there is only one edge with two vertices.
	\item In a general step, the current graph changes in exactly one way 
                 of the following options.
	\begin{enumerate}
		\item A randomly chosen edge is deleted.
		\item A random number of cherries or semi-cherries are joined 
                         to a randomly chosen edge (reproduction event). 
	\end{enumerate}
\end{enumerate}
\end{definition}

To get a well defined model we have to specify what randomly means.
For this purpose we need some more notations. Let $\mathcal{E}_n$ and 
$\mathcal{V}_n$ denote the sets of edges and vertices of the graph after $n$ steps, resp. At the 
$n$-th step, let $D_n(e)$ denote the event that $e \in\mathcal{E}_n$ becomes deleted, and 
$C_n(e)$ that cherries and semi-cherries are joined to $e \in\mathcal{E}_n$ (reproduction).
Note that these notations are meaningful only in the case when $\mathcal{E}_n$ is not empty.
In a reproduction event, let $\kappa_n$ and $\veps_n$ denote the random number of 
new vertices and edges added to the graph, resp. The pairs $(\kappa_1,\veps_1)$, 
$(\kappa_2,\veps_2),\,\dots$ are iid copies of a generic pair of positive 
integer valued random variables $(\kappa,\veps)$, where
\[
\veps=\sum_{i=1}^{\kappa}\vp_i,
\]
with iid summands $\vp_1,\,\vp_2,\,\dots$, independent of $\kappa$, that represent 
the amounts of new edges connecting the new vertices to the existing graph. Thus, 
\[
\Pr(\vp_i = 2) = p, \quad \text{and} \quad \Pr(\vp_i = 1) = 1 - p.
\]
We will suppose that $\kappa$ has an everywhere finite probability generating function 
$g_{\kappa}(z) = \E (z^{\kappa})$. Then $g_{\veps}(z)=\E(z^{\veps})<\infty$
for every $z \in \mathbb{R}$ as well; more precisely, $g_{\veps}(z)=g_{\kappa}(pz^2+(1-p)z)$.

Finally, let $\xi_n(e)$ denote the number of cherries and semi-cherries attached to edge
$e\in\mathcal{E}_n$ \emph{before} the $n$-th step (regardless that they still are in the graph or got
deleted at an earlier stage). This will be called \emph{the biological age} of edge 
$e$ at the $n$-th step.

Using these notations we are able to define the probabilities of the events $D_n(e)$ 
and $C_n(e)$:
\[
	\Pr(D_n(e)) = q_n (b + c \,\xi_n(e)), \quad \text{and} \quad \Pr(C_n(e)) = q_n,
\]
where $b,c$ are positive constants, and $q_n$ is a normalizing multiplier in order 
that the sum of probabilities of all these events be equal to $1$.

An alternative formulation can be given by introducing weights of edges. When an edge is 
added to the graph, initially it has weight $1+b$. Every cherry and semi-cherry connected to
an edge increases the edge's weight by $2c$ and $c$, resp. At each step, we first select 
an existing edge with probability proportional to its weight $w$, then either we delete it
with probability $1-1/w$, or reproduction takes place, with probability $1/w$.

So far we have given the mathematically rigorous definition of our random cherry graph, 
although it does not seem easy to treat. An obvious, and, as we will see, useful, idea is 
to change the time from discrete to continuous, examine the new continuous version, and 
then draw the appropriate conclusions on the original, embedded model. So, in the next 
subsection we will define this continuous version of our randomly evolving graph.

\subsection{Continuous time}

First of all ignore the fact that the time elapsed between consecutive events is considered 
as unit, and take a look at the role of the edges. After a new edge is drawn between a 
new and a previously existing vertex, it grows a random number of cherries and 
semi-cherries at the same time, on a random number of occasions before its deletion, 
which happens with probability proportional to a linear function of the number of these 
new edges. Hence, whenever new cherries and semi-cherries are joined to an edge, these 
%$\kappa_2 \leq \kappa$ twin edges (cherry, with probability $p$), and 
%$\kappa_1 \leq \kappa$ single edges (semi-cherry, with probability $1-p$), where 
%$\kappa_1 + \kappa_2 = \kappa$ 
new edges can be interpreted as descendants of the selected edge.

Accepting this approach, it is much easier to introduce the corresponding continuous time 
version of the previuosly defined random cherry graph. Furthermore, it can give the reader 
the idea, how our analysis will be performed in the forthcoming sections.

\begin{definition}
\emph{The continuous random cherry tree} is a graph process which is evolving in 
continuous time, as described below.
\begin{enumerate}
	\item Initially, there is only one edge, joining two vertices, called the ancestor.
	\item An edge produces possibly more than one new edges, as its children, at 
                different birth events, which form a homogeneous Poisson process of unit 
                density. Formally:
	\begin{enumerate}
		\item At every birth event a random number $\kappa$ of new vertices are 
                        added to the current graph. Their numbers are iid random variables.
		\item Each of these new vertices is connected to either a randomly chosen
                        endpoint of the selected edge with probability $1-p$, or both of its endpoints
                        with probability $p$.
	\end{enumerate} 
	\item To consider the deletion (or death) of an edge, let us call the time elapsed from its 
                birth \emph{the edge's physical age}, and let the number of new edges 
                 born up to physical age $t$ be denoted by $\xi(t)$ (this is the edge's biological age).
               	The edge is deleted at physical age $t$ with hazard rate $b + c \xi(t)$, a linear 
                function of its biological age. This means that the (conditional to the reproducing process)
                probability of surviving physical age $t$ is equal to
\[
\exp\bigg(-\int_0^t (b+c\xi(s))ds\bigg).
\]
\end{enumerate}
	Life histories of the different edges are assumed to be independent.
\end{definition}

Looking at this continuous time model, one can ask, how can this take us closer to the 
analysis of our original cherry graph. The answer is somehow hidden in the phrasing 
of its definition. Indeed, we used the words ancestor, children, birth and death to suggest 
that, in spite of its derivation, the described graph, looking at the process from the 
viewpoint of the edges, is nothing else than a \emph{Markov branching process}. 
Though the Markov property is a strong and profitable feature that a stochastic process 
can have, here we will only use the fact that this is just a special case of the so-called
 \emph{general time dependent branching process}, or \emph{Crump--Mode--Jagers 
(CMJ) process}.

\subsection{General branching processes}

Since there are several monographs discussing the properties of general branching processes 
(see e.g. \cite{Cohn}, \cite{CrumpModeI}, \cite{CrumpModeII}, \cite{Nerman}, or 
\cite{Haccouetal}, \cite{Jagers}),  
 here we only summarize how our model fits the theory of CMJ processes, 
borrowing the notations from \cite{MoriRokob1} and \cite{Nerman}.

Consider an arbitrary edge in our graph. Denote the Poisson process of its birth events 
by $(\pi(t))_{t \geq 0}$, and its life span by $\lambda$, with survival function $S(t) = 
\Pr(\lambda > t)$. At the consecutive birth events $\tau_i$ ($i = 1, 2, \dots$) this edge 
gives birth to $\veps_i$ ($i = 1, 2, \dots$) random edges, which are connected to its 
endpoints forming cherries or cherry stems. Hence the number of descendants of 
this parent edge up to the $i$-th birth event is equal to the sum $B_i = \veps_1 + 
\veps_2 + \dots + \veps_i$, thus its biological age at physical age $t$ is given by the 
random sum $\xi(t) = B_{\pi(t \wedge \lambda)}$. This defines a compound Poisson 
process. In the theory of general branching processes, the process $(\xi(t))_{t \geq 0}$ 
is called \emph{the reproduction process}.

Although all individuals $e$ in the general branching process can be characterized by the 
pairs $(\lambda_e, \xi_e)$, which are iid copies of $(\lambda, \xi(.))$ defined above, the 
popularity of this model is due to a third process joined to these two. This stochastic 
process, often denoted by $\phi(.)$, is called \emph{a random characteristic}. It somehow 
takes the history of an individual into consideration. In most cases it is assumed that 
$\phi(t) = 0$ if $t \leq 0$, and $\phi(t) \equiv \phi(\lambda)$ whenever $t \geq \lambda$, 
but it is not necessarily required.

Complete the previously defined pairs with iid copies of the random characteristic and 
denote the birth time of edge $e$ by $\sigma_e$. Then, summing up $\phi_e(t - \sigma_e)$ 
over all edges, namely, taking the sum
\[
Z^{\phi}(t) = \sum_{e} \phi_e(t - \sigma_e),
\]
only those individuals are counted, who are alive and possess the property measured by 
$\phi(\cdot)$ at the given moment. Accordingly, the process $(Z^{\phi}(t))_{t \geq 0}$ is 
called \emph{a (time-dependent) branching process counted by a random characteristic}. 
To enlight this notion, consider the random characteristic $\phi(t) = \ind_{\{ 0 \leq t < 
\lambda \}}$ as an example. Then the branching process $(Z^{\phi}(t))_{t \geq 0}$ counted 
by this characteristic is nothing else than the number of living individuals at time $t$.

Using the notations introduced above, it is obvious that edge $e$ is deleted at 
time $\sigma_e + \lambda_e$. It is possible that our graph process dies out, 
i.e., eventually all the edges get erased. Furthermore, it is well-known (see \cite{Jagers}),
that the reproduction mean $\E\xi(\infty)$ plays crucial role in the characterization of extinction 
(similarly to the discrete time Galton--Watson processes). Indeed, if this mean is less than
 or equal to $1$ (subcritical and critical regimes), then the process dies out almost surely. 
On the other hand, when $\E \xi( \infty) > 1$ (supercritical case), the extinction probability 
is strictly less than $1$. From now on, we only deal with the latter case, restricting ourselves 
to the event where the process does not get extinct.

In the continuous model the underlying branching process grows exponentially fast on 
the event of non-extinction, and the growth rate is characterized by the \emph{Malthusian 
parameter}, denoted by $\alpha$. This is the only positive solution of the equation
\begin{equation}\label{Malthus}
	\int_0^{\infty} e^{- \alpha t} \, \mu(dt) = 1,
\end{equation}
where $\mu$ is the so called \emph{intensity measure}, defined by $\mu(t) = \E \xi(t)$. 
With these concepts and notations, we have everything needed to cite the theorem proved 
by Nerman in \cite{Nerman}, which shows the asymptotic properties of a general branching 
process counted by a random characteristic. Since we do not need the most general form, 
here we only cite the form stated in \cite{MoriRokob1}.

\begin{theorem}\label{Nerman}
	Suppose the random characteristic $\phi$ satisfies the following conditions:  
	\begin{enumerate}[(i)]
		\item $\phi(t)\ge 0$,
		\item the trajectories of $\phi$ belong to the Skorokhod $D$-space, that is, 
                        they do not have discontinuities of the second kind,
		\item $\E[\sup_t\phi(t)]<\infty$.
	\end{enumerate}
	Furthermore, with the definition
	\[
		{}_{\alpha} \xi(t) = \int_0^t e^{- \alpha s} \, \xi(ds),
	 \]
	we have ${}_{\alpha} \xi(\infty) \in L \log^+ L$. Then 
	\begin{equation}\label{limphi}
		\lim_{t\to\infty} e^{- \alpha t} Z^{\phi}(t)=Y_\infty m^{\phi}_{\infty}
	\end{equation}
	almost surely, where 
	\begin{equation}\label{mphiinfty}
	m^{\phi}_{\infty}=\frac{\int_0^\infty e^{-\alpha t}\E\phi(t)\,dt}{\int_0^\infty t\, 
        e^{-\alpha t}\mu(dt)}\,,
	\end{equation} 
	$Y_\infty$ is a nonnegative random variable, which is positive
	on the event of non-extinction, it has expectation $1$, and it does not
	depend on the choice of $\phi$.
	
	In addition, if the random characteristics $\phi$ and $\psi$  are both satisfying 
        the conditions above, then, almost everywhere on the event of non-extinction, 
	\begin{equation}\label{fracphipsi}
	\lim_{t \to \infty} \frac{Z^{\phi}(t)}{Z^{\psi}(t)} = 
        \frac{ \int_0^{\infty} e^{- \alpha t} \E \phi(t) \, dt}{ \int_0^{\infty} e^{- \alpha t} 
        \E \psi(t) \, dt }
	\end{equation}
	holds.
\end{theorem}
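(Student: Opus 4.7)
The plan is to follow Nerman's original argument, built around a fundamental nonnegative martingale indexed by ``stopping lines'' in the genealogical tree. For each $t\ge 0$, let $\mathcal{J}_t$ denote the coming generation at time $t$: the set of individuals $x$ whose own birth time satisfies $\sigma_x>t$ but whose mother was born at or before $t$. Set
\[
R_t=\sum_{x\in\mathcal{J}_t} e^{-\alpha\sigma_x}.
\]
Conditioning on the sub-tree of individuals born up to time $t$ and using the Malthusian identity \eqref{Malthus} together with independence of the life histories, a direct computation shows that $R_t$ is a nonnegative martingale. By Doob's theorem it converges a.s.\ to some limit $Y_\infty\ge 0$ with $\E Y_\infty\le 1$, and because $R_t$ depends only on the genealogy and the birth times, the limit $Y_\infty$ is automatically independent of the choice of $\phi$.

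The next, and in my view hardest, step is to promote this a.s.\ convergence to $L^1$-convergence, so that $\E Y_\infty=1$ and $\{Y_\infty>0\}$ coincides, up to a null set, with the event of non-extinction. This is the CMJ analogue of the Kesten--Stigum theorem, and it is precisely where the hypothesis ${}_\alpha\xi(\infty)\in L\log^+L$ enters: via a size-biased tree construction along a spine (a Lyons--Pemantle--Peres style change of measure), the $L\log L$ integrability is exactly what prevents the mass of $R_t$ from escaping to atypical individuals with anomalously many early offspring. I expect this uniform integrability argument to be the main technical obstacle.

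Having the martingale in hand, the asymptotics of $Z^\phi$ follow by a renewal argument. Decomposing any individual alive at time $t$ according to its most recent ancestor and then taking expectations gives a renewal equation of the form
\[
e^{-\alpha t}\E Z^\phi(t)=e^{-\alpha t}\E\phi(t)+\int_0^t e^{-\alpha(t-s)}\,\E Z^\phi(t-s)\,e^{-\alpha s}\mu(ds),
\]
whose kernel $e^{-\alpha s}\mu(ds)$ is a probability measure by \eqref{Malthus}. Conditions (i)--(iii) on $\phi$ make $e^{-\alpha t}\E\phi(t)$ directly Riemann integrable, so the key renewal theorem yields
\[
\lim_{t\to\infty} e^{-\alpha t}\E Z^\phi(t)=m^\phi_\infty
\]
with $m^\phi_\infty$ as in \eqref{mphiinfty}. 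To convert this mean statement into the almost sure limit \eqref{limphi}, one sandwiches $e^{-\alpha t}Z^\phi(t)$ between random sums over $\mathcal{J}_s$ (for $s$ large but fixed) whose weights are conditional expectations already controlled by the previous display; letting $t\to\infty$ first and then $s\to\infty$, and using the martingale limit $R_s\to Y_\infty$ together with the $L^1$-boundedness from the second step to bound the discarded tail, produces $e^{-\alpha t}Z^\phi(t)\to Y_\infty m^\phi_\infty$ a.s. Finally, \eqref{fracphipsi} is immediate: applying \eqref{limphi} to $\phi$ and $\psi$ separately and taking the quotient on $\{Y_\infty>0\}$ cancels $Y_\infty$ and leaves exactly the ratio of the two Malthusian integrals.
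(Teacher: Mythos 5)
This theorem is not proved in the paper at all: it is Nerman's convergence theorem, imported verbatim (in the simplified form of \cite{MoriRokob1}) from \cite{Nerman}, and the paper's ``proof'' consists of the citation. So your proposal is not competing with an argument in the text; it is an outline of the external proof. As such an outline it is essentially faithful to the known argument: the coming-generation martingale $R_t=\sum_{x\in\mathcal{J}_t}e^{-\alpha\sigma_x}$ is indeed Nerman's fundamental object, the Malthusian equation \eqref{Malthus} is exactly what makes it a mean-one martingale, the renewal equation you write for $e^{-\alpha t}\E Z^{\phi}(t)$ is correct (and the kernel $e^{-\alpha s}\mu(ds)$ is a non-lattice probability measure here, since $\mu(dt)=\E(\veps)S(t)\,dt$ is absolutely continuous, so the key renewal theorem applies once (ii)--(iii) give direct Riemann integrability of $e^{-\alpha t}\E\phi(t)$ via domination by $e^{-\alpha t}\E\sup_s\phi(s)$), and the final ratio statement \eqref{fracphipsi} does follow by dividing two instances of \eqref{limphi} on $\{Y_\infty>0\}$. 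Two caveats. First, you correctly identify the two genuinely hard steps --- uniform integrability of $R_t$ under the $L\log^+L$ hypothesis, and the upgrade from convergence of means to almost sure convergence via the sandwich over $\mathcal{J}_s$ --- but you only name them; neither is carried out, so what you have is a roadmap rather than a proof. Second, a historical point: Nerman's original 1981 proof of the $x\log x$ step does not use the spine/size-biasing change of measure you describe (that route is due to Lyons--Pemantle--Peres and was adapted to CMJ processes later, e.g.\ by Olofsson); your version is a legitimate modern alternative, but if you intend to cite a source for the details you should cite the one whose method you are actually following.
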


Using the statements of this theorem, we will be able to rigorously formulate the 
connection between the discrete and the continuous models. Then, it will be relatively 
easy -- again with the help of Theorem \ref{Nerman} -- to describe certain properties 
of the discrete time graph, by proving results for the continuous one.

\section{Properties}

The section is organized as follows. First of all, we show how the growth rate in the 
discrete modell is connected to that of the continuous one. After that, taking only the 
latter model into consideration, we can deduce results on the original random cherry tree. 
This upcoming collection of propositions will include the probability of extinction, the 
asymptotic number of vertices, and other properties.

Before examinig the two models' connection, let us make some remarks. In Theorem 
\ref{Nerman} we have introduced the intensity measure $\mu$. By definition, $\mu(t)$ 
is the mean reproduction at time $t$. By applying Wald's identity we can 
express it in terms of the lifespan's survival function:
\[
\mu(t) = \E \xi(t) = \E(\veps) \E (\lambda \wedge t) = \E (\veps) \int_0^t S(u) \, du.
\]
Thus, the equation for the Malthusian parameter takes the following shape:
\begin{equation}\label{intensity}
 \E (\veps)\int_0^\infty e^{-\alpha t}S(t)\,dt=1.
\end{equation}

The idea of the present random cherry tree model comes from the continuous time random graph 
model considered in \cite{MoriRokob2}. Though there we ``\textit{did not fix how many 
new edges should be added to the graph, or how the subgraph they form should look like}", 
some important properties could be proved without further specification. Here we cite them 
merged into one theorem.
\begin{theorem} \cite[Corollaries $3.1$, $3.2$, and $3.3$]{MoriRokob2}
	\vspace{-5ex}
	\begin{description}
		\item[]
		\item[Survival function.] The survival function of the lifespan statisfies
		\begin{equation}\label{S}
		S(t) = P( \lambda > t ) = \exp \bigg\{ -(1+b) t + \frac{1}{c} \int_{e^{-ct}}^1 
                \frac{g_{\veps}(v)}{v} \, dv \bigg\}.
		\end{equation}
		\item[Supercriticality.] The random cherry tree is supercritical $(\E \xi(\infty) > 1)$ 
                if and only if
		\begin{equation}\label{supercritical}
		\frac{ \E (\veps)}{c} \int_0^1 u^{\tfrac{1+b}{c}-1} \exp \bigg\{ \frac{1}{c} 
                \int_u^1 \frac{g_{\veps}(v)}{v} \, dv \bigg\} \, du > 1.
		\end{equation}
		\item[Malthusian parameter.] The Malthusian parameter $\alpha$ of the 
                continuous time random cherry tree is determined by the equation
		\begin{equation}\label{Malthusp}
			\frac{ \E (\veps)}{c} \int_0^1 u^{\tfrac{\alpha+1+b}{c}-1} \exp \bigg 	\{
			\frac{1}{c} \int_u^1 \frac{g_{\veps}(v)}{v} \, dv \bigg\} \, du = 1.
		\end{equation}
	\end{description}
\end{theorem}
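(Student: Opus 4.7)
The plan is to prove the three parts in sequence, with the survival function doing the real work and the other two following by combining it with Wald's identity and a change of variables.

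For the survival function, I start from the conditional-survival formula
\[
S(t)=\E\bigl[e^{-bt}\exp\bigl(-c\textstyle\int_0^t\xi(s)\,ds\bigr)\bigr],
\]
and rewrite $\int_0^t\xi(s)\,ds=\sum_{i:\tau_i\le t}\veps_i(t-\tau_i)$, where $\tau_1<\tau_2<\dots$ are the jump times of the unit Poisson process $\pi$. Since the $\veps_i$ are iid and independent of $\pi$, conditioning first on $\pi$ gives
\[
\E\Bigl[\exp\bigl(-c\textstyle\int_0^t\xi(s)\,ds\bigr)\,\Bigm|\,\pi\Bigr]
=\prod_{i:\tau_i\le t}g_{\veps}\bigl(e^{-c(t-\tau_i)}\bigr).
\]
Then I apply the Laplace functional of a unit-rate Poisson process (Campbell's formula) to obtain
\[
\E\Bigl[\prod_{i:\tau_i\le t}g_{\veps}\bigl(e^{-c(t-\tau_i)}\bigr)\Bigr]
=\exp\Bigl(\int_0^t\bigl(g_{\veps}(e^{-cu})-1\bigr)\,du\Bigr),
\]
after reversing time via $u=t-s$. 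Finally, the substitution $v=e^{-cu}$ turns $\int_0^t(g_{\veps}(e^{-cu})-1)\,du$ into $\tfrac1c\int_{e^{-ct}}^1\tfrac{g_{\veps}(v)-1}{v}\,dv$; combining the $-t$ coming from the $-1$ inside the integral with the $-bt$ outside yields the claimed exponent $-(1+b)t+\tfrac1c\int_{e^{-ct}}^1 g_{\veps}(v)/v\,dv$.

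For the supercriticality criterion, I use Wald's identity (as is done for the intensity measure earlier in the paper) in the form $\E\xi(\infty)=\E(\veps)\,\E\lambda=\E(\veps)\int_0^\infty S(t)\,dt$. Substituting the expression for $S$ and again performing $u=e^{-ct}$, so that $e^{-(1+b)t}=u^{(1+b)/c}$ and $dt=-du/(cu)$, transforms $\int_0^\infty S(t)\,dt$ into $\tfrac1c\int_0^1 u^{(1+b)/c-1}\exp\bigl(\tfrac1c\int_u^1 g_{\veps}(v)/v\,dv\bigr)\,du$, and the condition $\E\xi(\infty)>1$ is exactly \eqref{supercritical}. The Malthusian parameter is handled identically: equation \eqref{intensity} gives $\E(\veps)\int_0^\infty e^{-\alpha t}S(t)\,dt=1$, and inserting the extra factor $e^{-\alpha t}=u^{\alpha/c}$ into the same substitution produces \eqref{Malthusp}.

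The only nontrivial step is the Campbell-formula computation for the survival function; once that is in hand, the other two items are just the same change of variables with different powers of $u$. I expect no real obstacle beyond checking the limits of integration carefully, since the substitution $v=e^{-cu}$ reverses orientation and the sign has to be tracked to get $\int_{e^{-ct}}^1$ rather than $\int_1^{e^{-ct}}$. Because $g_{\veps}$ is finite on all of $\mathbb{R}$, the conditional expectation and its Fubini interchange with the outer expectation over $\pi$ are justified without additional integrability arguments.
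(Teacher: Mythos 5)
Your proposal is correct. Note, however, that this paper does not prove the statement at all: it is imported verbatim as Corollaries 3.1--3.3 of the companion paper \cite{MoriRokob2}, so there is no in-paper argument to compare against. Your derivation is a clean, self-contained substitute. The key step — writing $S(t)=\E\exp\{-bt-c\int_0^t\xi(s)\,ds\}$, reducing $\int_0^t\xi(s)\,ds$ to $\sum_{i:\tau_i\le t}\veps_i(t-\tau_i)$, conditioning on $\pi$ to get $\prod_i g_{\veps}(e^{-c(t-\tau_i)})$, and then applying the probability generating functional of the unit-rate Poisson process — checks out, and the bookkeeping of the $-1$ term under $v=e^{-cu}$ correctly produces the extra $-t$ in the exponent. (Strictly speaking, the exponential product formula you invoke is the probability generating functional rather than Campbell's formula, but the identity you use is the right one, and since $0\le g_{\veps}(e^{-c(t-s)})\le 1$ everything is bounded, so no integrability issues arise.) The other two items then follow exactly as you say, from $\E\xi(\infty)=\E(\veps)\int_0^\infty S(t)\,dt$ and equation \eqref{intensity} via the substitution $u=e^{-ct}$; this matches how the paper itself uses Wald's identity for the intensity measure. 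One small point worth making explicit if you write this up: the hazard rate is driven by the \emph{unkilled} compound Poisson process (births cannot occur after death anyway, so $\xi(s)$ agrees with it for $s<\lambda$), which is what licenses treating the sum over all $\tau_i\le t$ independently of $\lambda$ in the conditional survival formula.
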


\subsection{From discrete to continuous}

As mentioned before, the original random cherry tree model is embedded into the continuous 
one. Indeed, if one takes `snapshots' of the continuous random cherry tree at the moments of 
events (which can be birth or death) and looks at these photographs one by one in chronological 
order, then the resulted process is just the discrete time cherry tree process. 

As a consequence, it is obvious that the probability of extinction is the same for both processes. 
However, so as to transfer the asymptotic results that will be obtained for the continuous case,
we need to compare the growth rates of the two processes. In order to do so, as a first step, we will 
calculate the asymptotics of the number of edges in the continuous model. 

Recall the definition 
\[
{}_{\alpha} \xi(t) = \int_0^t e^{- \alpha s} \, \xi(ds),
\]
where $\alpha$ is the Malthusian-parameter, and $(\xi(t))_{t \geq 0}$ is the biological age process 
of an edge. 

\begin{proposition}\label{xi_cond}
\[
\E \big[ {}_{\alpha} \xi^2(\infty) \big] < \infty,
\]
and hence the condition ${}_{\alpha}\xi(\infty) \in L \log^+ L$ is statisfied.		
\end{proposition}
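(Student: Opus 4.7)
The plan is to exploit the explicit representation of the reproduction process as a compound Poisson-type sum, dominate it by an unconstrained version, and then compute the second moment directly; finiteness of $\E[{}_\alpha\xi^2(\infty)]$ immediately implies the $L\log^+L$ condition.

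First I would write out the stochastic integral explicitly. Since $\xi(t)=B_{\pi(t\wedge\lambda)}$ has jumps of size $\veps_i$ at the Poisson epochs $\tau_i$ up to time $\lambda$, one has
\[
{}_\alpha\xi(\infty)=\int_0^\lambda e^{-\alpha s}\,\xi(ds)=\sum_{i:\,\tau_i\le\lambda}e^{-\alpha\tau_i}\veps_i.
\]
All summands are nonnegative, so dropping the restriction $\tau_i\le\lambda$ gives the almost sure bound
\[
{}_\alpha\xi(\infty)\le N:=\sum_{i=1}^{\infty}e^{-\alpha\tau_i}\veps_i,
\]
which is a functional of a marked Poisson process on $\mathbb{R}_+$ whose marks $\veps_i$ are iid and independent of the arrival process.

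Next I would compute $\E[N^2]$ by Campbell's formula for marked Poisson processes. With unit intensity and the kernel $f(s)=e^{-\alpha s}$,
\[
\E[N]=\E(\veps)\int_0^\infty e^{-\alpha s}\,ds=\frac{\E(\veps)}{\alpha},\qquad
\Var(N)=\E(\veps^2)\int_0^\infty e^{-2\alpha s}\,ds=\frac{\E(\veps^2)}{2\alpha},
\]
so
\[
\E[N^2]=\frac{\E(\veps^2)}{2\alpha}+\frac{(\E(\veps))^2}{\alpha^2}.
\]
It therefore suffices to check $\E(\veps^2)<\infty$. This is where the standing assumption that $g_\kappa$ is everywhere finite enters: since $\veps=\sum_{i=1}^\kappa\vp_i\le 2\kappa$, we have $\E(\veps^2)\le 4\E(\kappa^2)$, and the finiteness of the generating function on all of $\mathbb{R}$ ensures that $\kappa$ has moments of every order. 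Hence $\E[N^2]<\infty$, and therefore $\E[{}_\alpha\xi^2(\infty)]\le\E[N^2]<\infty$.

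Finally, the $L\log^+L$ condition is a free consequence: for any nonnegative random variable $X$ the elementary inequality $x\log^+x\le x^2$ (valid because $\log^+x=0$ for $x\le 1$ and $\log x\le x$ for $x\ge 1$) yields $\E[{}_\alpha\xi(\infty)\log^+{}_\alpha\xi(\infty)]\le\E[{}_\alpha\xi^2(\infty)]<\infty$. There is no real obstacle here; the only subtlety is recognizing that the compound-Poisson second-moment computation goes through precisely because $g_\kappa$ being entire on $\mathbb{R}$ upgrades the model's reproduction variable $\veps$ to something with finite second moment.
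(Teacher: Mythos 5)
Your proof is correct and follows essentially the same route as the paper: both drop the restriction $\tau_i\le\lambda$ to dominate ${}_\alpha\xi(\infty)$ by the unrestricted sum $\sum_i \veps_i e^{-\alpha\tau_i}$ and then show that this dominating variable has finite second moment, concluding via $x\log^+x\le x^2$. The only difference is cosmetic: the paper bounds the $L^2$ norm by Minkowski's inequality using $\tau_i\sim\Gamma(i,1)$, whereas you compute the second moment exactly via Campbell's formula for the marked Poisson process; both hinge on $\E(\veps^2)<\infty$, which you justify explicitly.
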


\begin{proof}	
By definition we have
\[
{}_{\alpha} \xi(\infty) = \int_0^{\infty} e^{- \alpha t} \, \xi (dt) = 
\sum_{\tau_i < \lambda} \veps_i e^{- \alpha \tau_i} 
\leq \sum_{i = 1}^{\infty}  \veps _i e^{- \alpha \tau_i}.
\]
Note that the random variables $\veps_i$ and $\tau_i$ are independent for every $i=1,2,\dots$. 
Hence, for the $L^2$ norm we get
\[
{\|{}_{\alpha} \xi(\infty)'\|}_2 \le \sum_{i = 1}^{\infty} {\|\veps_i e^{- \alpha \tau_i}\|}_2 
= {\|\veps\|}_2 \sum_{i=1}^{\infty} \frac{1}{(1+2\alpha)^{i/2}} <\infty,
\]
using the fact that in a homogeneous Poisson process with unit density the birth times 
$\tau_i$ are distributed as $\Gamma(i, 1)$. 
\end{proof}

\begin{proposition}\label{living_edges}
Denote the number of living edges in the continuous model at time $t$ by $E(t)$. Then 
\[
\lim_{t \to \infty} e^{- \alpha t} E(t) = \Big[ \E^2(\veps) \int_0^\infty t\,
e^{-\alpha t} S(t) \, dt \Big]^{-1} Y_{\infty}
\]
almost surely, where $Y_{\infty}$ is the same as in Theorem \ref{Nerman}.
\end{proposition}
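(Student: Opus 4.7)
The plan is to invoke Theorem \ref{Nerman} with the random characteristic $\phi(t) = \ind_{\{0 \le t < \lambda\}}$, which was already singled out in the text as the canonical choice for counting currently alive individuals. With this $\phi$, the sum $Z^\phi(t) = \sum_e \phi_e(t-\sigma_e)$ counts exactly those edges $e$ that have been born by time $t$ and not yet deleted, i.e.\ $Z^\phi(t) = E(t)$.

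Next I would check the three conditions on $\phi$ in Theorem \ref{Nerman}. Nonnegativity is immediate, and the trajectories are indicator functions of a half-open interval, so they are c\`adl\`ag (in particular they belong to the Skorokhod $D$-space). Since $0\le\phi\le 1$ one has $\E[\sup_t\phi(t)]\le 1$, so condition (iii) holds. The remaining hypothesis ${}_\alpha\xi(\infty)\in L\log^+L$ is supplied by Proposition \ref{xi_cond}, which gives the stronger $L^2$ bound.

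Having verified the hypotheses, Theorem \ref{Nerman} yields
\[
\lim_{t\to\infty}e^{-\alpha t}E(t)=Y_\infty\, m^\phi_\infty,
\qquad
m^\phi_\infty=\frac{\int_0^\infty e^{-\alpha t}\E\phi(t)\,dt}{\int_0^\infty t\,e^{-\alpha t}\mu(dt)}.
\]
Now I would compute both integrals explicitly. For the numerator, $\E\phi(t)=\Pr(\lambda>t)=S(t)$, so the numerator equals $\int_0^\infty e^{-\alpha t}S(t)\,dt$. For the denominator I would use Wald's identity exactly as in the preamble to this subsection: $\mu(t)=\E(\veps)\int_0^t S(u)\,du$, hence $\mu(dt)=\E(\veps)\,S(t)\,dt$, giving $\int_0^\infty t\,e^{-\alpha t}\mu(dt)=\E(\veps)\int_0^\infty t\,e^{-\alpha t}S(t)\,dt$.

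The last cosmetic step — and really the only place where any identity beyond Nerman's theorem enters — is to eliminate $\int_0^\infty e^{-\alpha t}S(t)\,dt$ from the numerator using the Malthusian equation (\ref{intensity}), which reads $\E(\veps)\int_0^\infty e^{-\alpha t}S(t)\,dt=1$. Substituting $\int_0^\infty e^{-\alpha t}S(t)\,dt=1/\E(\veps)$ into $m^\phi_\infty$ gives
\[
m^\phi_\infty=\Big[\E^2(\veps)\int_0^\infty t\,e^{-\alpha t}S(t)\,dt\Big]^{-1},
\]
which is the stated constant. There is no real obstacle here: everything reduces to picking the right $\phi$, checking trivial regularity, and then a two-line manipulation using Wald's identity and the Malthusian equation.
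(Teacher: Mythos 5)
Your proposal is correct and follows exactly the paper's own argument: choose $\phi(t)=\ind_{\{0\le t<\lambda\}}$, note the conditions of Theorem \ref{Nerman} are trivially satisfied for an indicator (with Proposition \ref{xi_cond} supplying the $L\log^+L$ hypothesis), and evaluate $m^\phi_\infty$ via Wald's identity and the Malthusian equation \eqref{intensity}. No differences worth noting.
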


\begin{proof}
Since $E(t) = Z^{\phi}(t)$ with the random characteristic $\phi(t) =\ind_{\{ 0 \le t<\lambda\}}$, 
it is plausible to use Theorem \ref{Nerman}. Hence the proof of the statement is conducted by 
showing that all assumptions imposed on $(\phi(t))_{t \geq 0}$ hold, and then calculating the 
constant $m_{\infty}^{\phi}$. However, since this random characteristic is just an indicator, 
the conditions are trivially satisfied, so it is enough to determine the constant, which, by the 
definitions and the previous remark on the intensity measure, is equal to
\begin{gather*}
m_{\infty}^{\phi} = \frac{\int_0^\infty e^{-\alpha t}\E\phi(t)\,dt}
{\int_0^\infty t\,e^{-\alpha t}\mu(dt)} = \frac{\int_0^\infty e^{-\alpha t} S(t) \,dt}
{ \E (\veps) \int_0^\infty t\,e^{-\alpha t} S(t) \, dt } \\
= \bigg[ \E^2(\veps) \int_0^\infty t\,e^{-\alpha t} S(t) \, dt \bigg]^{-1}.
\qedhere
\end{gather*}
\end{proof}

In order to transfer this result to the original discrete time cherry tree, we have to deal
with the asymptotic growth rate of the number of events in the continuous time model.

\begin{theorem}\label{growthrate} 
Introduce the notation $H(t)$ for the number of events (birth or death) up to time $t$. 
Then, on the event of non-extinction,
\[
\lim_{t \to \infty} \frac{H(t)}{E(t)} = \frac{ \E (\veps) + 1-\alpha}{ \alpha}
\]
almost everywhere.
\end{theorem}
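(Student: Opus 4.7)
The plan is to realise $H(t)$ as a general branching process $Z^\phi(t)$ counted by an appropriate random characteristic $\phi$, and then apply the ratio form \eqref{fracphipsi} of Theorem \ref{Nerman} with the characteristic $\psi(t)=\ind_{\{0\le t<\lambda\}}$ already used in Proposition \ref{living_edges}. Since each edge contributes to the event count through its own Poisson birth events (run until death) and through its own death, the natural choice is
\[
\phi(t)=\pi(t\wedge\lambda)+\ind_{\{\lambda\le t\}}\quad(t\ge 0),\qquad \phi(t)=0\ (t<0),
\]
so that $\sum_e\phi_e(t-\sigma_e)=H(t)$ by construction (the ancestor's initial presence is not counted as an event of the process).

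Next I would verify the three conditions of Theorem \ref{Nerman} for $\phi$. Conditions (i) and (ii) are immediate: $\phi\ge 0$ and its trajectories are non-decreasing right-continuous step functions. Condition (iii) is the only one that deserves a moment of thought, because $\phi$ retains a memory of $\pi(\lambda)$ after the edge dies and is therefore not bounded; however $\sup_t\phi(t)=\pi(\lambda)+1$, so $\E[\sup_t\phi(t)]=\E\lambda+1<\infty$ thanks to the exponential decay $S(t)\le e^{-bt}$ (also visible from \eqref{S}). The $L\log^+L$ hypothesis on $_\alpha\xi(\infty)$ is exactly Proposition \ref{xi_cond}.

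Finally I would compute the numerator of \eqref{fracphipsi}. Using $\E\phi(t)=\int_0^t S(u)\,du+1-S(t)$, Fubini, and the Malthusian identity $\int_0^\infty e^{-\alpha t}S(t)\,dt=1/\E(\veps)$ from \eqref{intensity}, a short calculation gives
\[
\int_0^\infty e^{-\alpha t}\E\phi(t)\,dt=\frac{\E(\veps)+1-\alpha}{\alpha\,\E(\veps)}.
\]
Dividing by the corresponding integral for $\psi$, namely $\int_0^\infty e^{-\alpha t}S(t)\,dt=1/\E(\veps)$, yields the claimed limit $(\E(\veps)+1-\alpha)/\alpha$. No substantial obstacle is foreseeable; after the identification of $\phi$ the entire argument is routine, the only delicate point being the verification that $\sup_t\phi(t)$ is integrable, which is handled by the exponential tail of the lifespan.
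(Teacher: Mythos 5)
Your proposal is correct and follows essentially the same route as the paper: the same characteristic $\psi(t)=\pi(t\wedge\lambda)+\ind_{\{\lambda\le t\}}$ for $H(t)$, the same Fubini computation via \eqref{kellenifog} and \eqref{intensity}, and the same application of \eqref{fracphipsi} against the indicator characteristic for $E(t)$. The only addition is your explicit check of $\E[\sup_t\phi(t)]<\infty$ via the exponential tail of $\lambda$, which the paper leaves implicit.
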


\begin{proof}
We want to use \eqref{fracphipsi} from Theorem \ref{Nerman}. Since the asymptotics of
$E(t)$ is known from Proposition \ref{living_edges}, it is enough to find an adequate random 
characteristic $\psi(t)$ for which $H(t) = Z^{\psi}(t)$ holds. It is obvious that $\psi(t) =
\pi(t \wedge \lambda)+\ind_{\{ \lambda \leq t \}}$ will do (note that $\pi(t)=0$ for 
negative $t$).
	
To compute the numerator of \eqref{fracphipsi}, recall that $(\pi(t))_{t \geq 0}$ is a 
Poisson process with unit intensity, hence 
\[
\E (\pi(t \wedge \lambda)) = \E(t \wedge \lambda) = \int_0^t S(u) \, du.
\]
Reversing the order of integrations we get
\begin{equation}\label{kellenifog}
\int_0^\infty e^{-\alpha t}\E(t \wedge \lambda) =\int_0^\infty\int_u^\infty
 e^{-\alpha t}\,dt \,S(u)\,du=\frac{1}{\alpha}\int_0^\infty e^{-\alpha u}S(u)\,du.
\end{equation}
This, by \eqref{intensity} and \eqref{Malthus}, implies that
\begin{align*}
\int_0^{\infty} e^{- \alpha t} \E(\psi(t) )\, dt & = 
\frac{1}{\alpha} \int_0^{\infty} e^{- \alpha t} S(t) \, dt + 
\int_0^{\infty} e^{- \alpha t} (1 - S(t)) \, dt \\ 
&= \int_0^\infty e^{-\alpha t}\,dt + \Big( \frac{1}{\alpha} - 1 \Big) \int_0^{\infty} 
e^{- \alpha t} S(t) \, dt \\ 
& =	\frac{1}{\alpha} + \Big( \frac{1}{\alpha} - 1 \Big) \frac{1}{ \E (\veps)} 
\int_0^{\infty} e^{- \alpha t} \, \mu(dt) \\ 
& = 	\frac{1}{\alpha} + \Big( \frac{1}{\alpha} - 1 \Big) \frac{1}{ \E (\veps)} 
\end{align*}
holds. Plugging this, and the result of Proposition \ref{living_edges} into 
\eqref{fracphipsi}, we get
\begin{gather*}
 \frac{ \int_0^{\infty} e^{- \alpha t} \E \big[\ind_{\{ \lambda \leq t \}}+ 
 \pi(t \wedge \lambda) \big] \, dt }{ \int_0^{\infty} e^{- \alpha t} \E \big[
 \ind_{\{ 0 \leq t < \lambda \}} \big] \, dt } = \frac{ \tfrac{1}{\alpha} + 
 \big( \tfrac{1}{\alpha} - 1 \big) \tfrac{1}{ \E (\veps)}}
 {\int_0^\infty e^{-\alpha t} S(t) \,dt}\\
 = \frac{ \E (\veps) + 1-\alpha}{ \alpha}\,,
\end{gather*}
as needed.
\end{proof}

It is evident that we can obtain results on the original discrete time cherry tree, 
if we normalize a quantity of the continuous one with the number of events $H(t)$. 
For example, Theorem \ref{growthrate} immediately yields the following asymptotic 
property of the number $E_n$ of  living edges in the discrete time cherry tree.

\begin{corollary}\label{edges}
Almost everywhere on the event of non-extinction,
\[
\lim_{n \to \infty} \frac{E_n}{n} = \lim_{t \to \infty} \frac{E(t)}{H(t)}
= \frac{\alpha}{  \E (\veps) + 1 - \alpha }\,,
\]
where $E_n=|\mathcal{E}_n|$  denotes the number of edges after the $n$-th step.\qed
\end{corollary}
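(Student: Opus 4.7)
The plan is to use the embedding of the discrete process into the continuous one explicitly. Let $T_n$ denote the (random) time of the $n$-th event in the continuous model, so that by construction $\mathcal{E}_n = \mathcal{E}(T_n)$ as graphs, hence $E_n = E(T_n)$, and also $H(T_n) = n$. Therefore, on the event of non-extinction,
\[
\frac{E_n}{n} = \frac{E(T_n)}{H(T_n)}.
\]
The right-hand ratio is already controlled by Theorem \ref{growthrate}, which gives
\[
\lim_{t \to \infty} \frac{E(t)}{H(t)} = \frac{\alpha}{\E(\veps) + 1 - \alpha}
\]
almost surely on non-extinction. So the only thing to verify is that $T_n \to \infty$ almost surely on non-extinction, which then allows the composition $t \mapsto T_n$ to inherit the limit.

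For this, I would observe two points. First, $H(t)$ is almost surely finite for every fixed $t$: in any compact time interval the number of living edges stays finite (this is a property of the underlying CMJ process under our assumption that $g_\kappa$ is entire, which prevents explosion), and each living edge produces new events at total rate at most $1 + b + c\,\xi(\cdot)$, so the total number of events up to time $t$ is dominated by a finite-mean quantity. Second, on non-extinction, $E(t) \to \infty$ by Proposition \ref{living_edges} (since $Y_\infty > 0$ there), which forces infinitely many birth events; hence the sequence $T_n$ is well defined for every $n$ and must tend to infinity, as otherwise $H$ would be infinite at some finite time, contradicting the first point.

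Combining these two observations, on the event of non-extinction we have $T_n \uparrow \infty$, and applying the limit of $E(t)/H(t)$ along this subsequence yields
\[
\lim_{n \to \infty} \frac{E_n}{n} = \lim_{n \to \infty} \frac{E(T_n)}{H(T_n)} = \frac{\alpha}{\E(\veps) + 1 - \alpha},
\]
as claimed. The main (minor) obstacle is the non-explosion argument that justifies $T_n \to \infty$; everything else is a direct substitution into the previously established continuous-time asymptotics.
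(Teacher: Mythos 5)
Your proposal is correct and follows essentially the same route as the paper, which treats the corollary as an immediate consequence of Theorem \ref{growthrate} via the embedding $E_n = E(T_n)$, $H(T_n)=n$ at the event times $T_n$. The extra care you take to justify $T_n\to\infty$ (non-explosion plus $E(t)\to\infty$ on non-extinction) is a detail the paper silently omits, but it does not change the argument.
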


\subsection{Probability of extinction}\label{ss3.2}

Inequality \eqref{supercritical} contains a necessary and sufficient condition 
for our evolving graph process to be
supercritical. In this case the probability of extinction (when all edges die out) 
is strictly less than $1$. Since there is an embedded Galton-Watson process 
with offspring size $\xi(\infty)=\xi(\lambda)$ in every general branching process, 
this probability can be obtained as the smallest nonnegative solution of the 
equation $g_{\xi(\lambda)}(z) = z$, where 
\[
g_{\xi(\lambda)}(z) = \E \big( z^{\xi(\lambda)} \big).
\]
Hence, for the extinction probability we need to compute this probability generating 
function. In the next lemma we derive a general formula, from which the 
requested generating function can easily be obtained. To that end, we first 
introduce the process $(\pi'(t))_{t \geq 0}$ that counts the number of 
vertices added to the graph by an edge up to and including its physical age $t$. 
This is a compound Poisson process having jumps exactly when so does $\pi(t)$. 
The jump sizes are $\kappa_i$ (iid copies of $\kappa$). 
\begin{lemma}\label{genfunc}
Define the joint probability generating function of $\pi'(\lambda)$ and $\xi(\lambda)$ as
\[
f(x,y) = \E \big( x^{\pi'(\lambda)} y^{\xi(\lambda)} \big) = \sum_{i=0}^{\infty} 
\sum_{j=i}^{2i} \Pr(\pi'(\lambda) = i, \, \xi(\lambda) = j) x^i y^j.
\]
Then
\[
f(x,y) = 1 - \frac{1 -  g_{\kappa, \veps}(x,y)}{m} \int_0^1 u^{ \tfrac{1+a}{m} - 1 } 
\exp \bigg\{ \int_u^1 g_{\kappa, \veps} (x, sy) \, ds \bigg\} du,
\]
where $g_{\kappa, \veps}(x,y)= \E \big( x^{\kappa} y^{\veps}  \big)$ is the joint 
probability generating function of $(\kappa, \veps)$.
\end{lemma}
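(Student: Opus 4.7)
The plan is to exploit the conditional-hazard representation: conditionally on the entire reproduction history, $\lambda$ has hazard rate $\Lambda(t)=b+c\xi(t)$ and hence density $\Lambda(t)\exp\bigl(-\int_0^t\Lambda(s)\,ds\bigr)$. Thus
\[
f(x,y)=\E\int_0^\infty x^{\pi'(t)}y^{\xi(t)}\,\Lambda(t)\,e^{-\int_0^t\Lambda(s)\,ds}\,dt.
\]
The first step is to convert this ``death integral'' into a ``birth integral'' by Stieltjes integration by parts: the exponential factor is continuous in $t$, whereas $t\mapsto x^{\pi'(t)}y^{\xi(t)}$ is a pure-jump process, jumping at each Poisson birth time $\tau_i$ by the multiplicative factor $x^{\kappa_i}y^{\veps_i}$. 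Using unit-intensity Poisson compensation ($\E\sum_i H(\tau_i^-)=\int_0^\infty\E H(t)\,dt$) together with independence of $(\kappa_i,\veps_i)$ from the history up to $\tau_i^-$, I expect to obtain
\[
f(x,y)=1+\bigl(g_{\kappa,\veps}(x,y)-1\bigr)\int_0^\infty\Psi(t,x,y)\,dt,\qquad
\Psi(t,x,y):=\E\Bigl[x^{\pi'(t)}y^{\xi(t)}\,e^{-\int_0^t\Lambda(s)\,ds}\Bigr].
\]

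Next I would compute $\Psi$ by the same order-statistic argument that gave $S(t)$. Given $\pi(t)=n$, the unordered birth times are iid uniform on $[0,t]$, and the three quantities
\[
\pi'(t)=\sum_{i=1}^n\kappa_i,\quad \xi(t)=\sum_{i=1}^n\veps_i,\quad \int_0^t\xi(s)\,ds=\sum_{i=1}^n\veps_i(t-\tau_i)
\]
all factor into independent contributions of the triples $(\kappa_i,\veps_i,\tau_i)$. Averaging one factor over $\tau\sim\mathrm{Unif}[0,t]$ and then summing against the Poisson weights (exactly as in the derivation of \eqref{S}) yields
\[
\Psi(t,x,y)=\exp\!\bigg(-(1+b)t+\int_0^t g_{\kappa,\veps}(x,y\,e^{-cr})\,dr\bigg).
\]

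Finally, substituting $u=e^{-ct}$ in the outer integral $\int_0^\infty\Psi\,dt$ and $s=e^{-cr}$ in the inner one rescales both ranges to $[0,1]$ and produces the factor $u^{(1+b)/c-1}/c$ together with an inner integral of $g_{\kappa,\veps}(x,sy)/s$ over $s\in[u,1]$. Combining this with the prefactor $g_{\kappa,\veps}(x,y)-1$ and rewriting as $-(1-g_{\kappa,\veps}(x,y))$ yields the stated identity (with $a,m$ in the statement playing the role of $b,c$). As a sanity check, putting $x=y=1$ recovers $\Psi(t,1,1)=S(t)$, and differentiating in $y$ at $y=1$ recovers the supercriticality quantity on the left-hand side of \eqref{supercritical}.

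The main obstacle is the integration-by-parts step: one has to track the jumps of $x^{\pi'(t)}y^{\xi(t)}$ cleanly so that the birth-time contributions coalesce into the factor $g_{\kappa,\veps}(x,y)-1$, and verify that the boundary term at $t=\infty$ vanishes, which holds because the $e^{-(1+b)t}$ factor eventually dominates the growth of the compound-Poisson exponent in $\Psi$. Once that identity is in place, the remaining two steps are direct analogues of the computations that already produced \eqref{S} and \eqref{supercritical}.
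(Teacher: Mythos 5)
Your proposal is correct, and it takes a genuinely different route from the paper's. The paper stays essentially in discrete time: it introduces the quantities $v_{i,i+j}=\Pr(\exists\,t<\lambda:\pi'(t)=i,\ \xi(t)=i+j)/(1+b+c(i+j))$ (expected sojourn times of the embedded jump chain), derives a recursion for them by conditioning on the last litter, converts that recursion into a first-order linear PDE for $G(x,y)=\sum_{i,j}v_{i,i+j}x^iy^j$, solves the PDE along rays via the substitution $h(t)=G(tx,ty)$, and finally passes from $G$ to $f$ using the one-step death probability $(b+cj)/(1+b+cj)$. You instead work directly with the continuous-time hazard representation: the compensator identity for the unit-rate Poisson process, combined with independence of the marks $(\kappa_i,\veps_i)$ from the strict past, converts the death integral into a birth integral with prefactor $g_{\kappa,\veps}(x,y)-1$, and the kernel $\Psi(t,x,y)=\E\big[x^{\pi'(t)}y^{\xi(t)}\ind_{\{\lambda>t\}}\big]$ is computed in closed form by the order-statistics property of the Poisson points. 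Note that $\int_0^\infty\Psi(t,x,y)\,dt$ is exactly the paper's $G$ (after $u=e^{-ct}$), so the two arguments meet in the middle; yours buys a PDE-free derivation and the time-resolved object $\Psi$, which cleanly generalizes \eqref{S}, at the cost of justifying the integration-by-parts/compensation step. That step is sound: for $0\le x,y\le 1$ the integrands are nonnegative (so Tonelli and the compensation formula apply) and $x^{\pi'(t)}y^{\xi(t)}e^{-\int_0^t(b+c\xi)}\le e^{-bt}$ kills the boundary term at infinity; the identity for general $x,y$ then follows by comparing power-series coefficients.

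One discrepancy to flag, which is not a flaw in your argument: your computation produces the exponent $\frac1c\int_u^1 g_{\kappa,\veps}(x,sy)\,\frac{ds}{s}$, whereas the Lemma as printed has $\int_u^1 g_{\kappa,\veps}(x,sy)\,ds$ (and writes $a,m$ for $b,c$). Your version is the correct one: at $x=y=1$ it is the only form consistent with \eqref{S}, \eqref{supercritical} and \eqref{Malthusp}, and the paper's displayed $h(t)$ does not actually solve its own ODE unless the factors $1/c$ and $1/s$ are restored in the inner integral. So you should not adjust your formula to match the printed statement.
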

\begin{remark}
Using the well-known formula for the generating function of the binomial distribution  
and the connection between $\kappa$ and $\veps$, we obtain
\[
g_{\kappa, \veps} (x,y) = \E\big(x^{\kappa}\,\E(y^\veps|\kappa)\big)
 =\E(x^\kappa (py^2+(1-p)y)^\kappa)= g_{\kappa} \big( x y (py + (1-p))\big)
\]
for the joint probability generating function of $(\kappa, \veps)$.
\end{remark}

\begin{proof}[Proof of Lemma \ref{genfunc}]
First, consider the generating function
\[
G(x,y) = \sum_{i=0}^{\infty} \sum_{j=0}^{i} \frac{ \Pr( \exists \, t < \lambda : 
\pi'(t) = i, \, \xi(t) =i+ j ) }{ 1 + b + c(i+j) } x^i y^j.
\]
Since $\pi'(t)\le\xi(t)\le 2\pi'(t)$, it seems reasonable to deal with events of the form 
$\{ \exists \, t < \lambda : \pi'(t) = i, \, \xi(t) =i+ j\}$.
For the sake of convenience, denote the coefficient of $x^iy^j$  by $v_{i, i+j}$. 
Note that $v_{i,i+j}=0$ for $j>i$.  By the definition of our process $v_{0,0} = 
\tfrac{1}{b+1}$ and $v_{0,j} = 0 $ ($j \geq 1$). Since
\begin{align*}
\Pr( \exists t < \lambda :\ &\pi'(t) = i, \, \xi(t) = i+j )\\
&= \sum_{\ell = 1}^{i} \sum_{k = 0}^{j}\Pr( \exists t < \lambda : \pi'(t) = i - \ell,
 \, \xi(t) = i-\ell+j -k)\\ &\quad\times \frac{\Pr(\kappa = \ell,\,\veps =\ell+k)}
{1 + b + c(i-\ell+j-k)}\,,
\end{align*}
with the notation introduced above we have the following recursion:
\[
(1+b+c(i+j)) v_{i,i+j}=\sum_{\ell =1}^{i} \sum_{k = 0}^{j} v_{i-\ell,(i-\ell)+(j-k)} 
\Pr(\kappa = \ell, \veps = \ell+k).
\]
Multiply both sides by $x^i y^j$ and add up for $i\geq 1$, $0\le j\le i$ to get
\begin{align*}
& (1+b)\Big[G(x,y)-\frac{1}{1+b}\Big]+c(x\,\partial_xG(x,y)+y\,\partial_yG(x,y))\\ 
& \qquad =\sum_{i=1}^{\infty}\sum_{j=0}^{i}\sum_{\ell=1}^{i}\sum_{k=0}^{j}  
v_{i-\ell,(i-\ell)+(j-k)}\Pr(\kappa = \ell, \veps = \ell+k) x^i y^j \\ 
& \qquad =\sum_{\ell=1}^{\infty}\sum_{k=0}^{\ell}\Pr(\kappa=\ell,\veps=\ell+k) 
x^{\ell} y^{k} \sum_{i = \ell}^{\infty} \sum_{j = k}^{i} v_{i-\ell,(i-\ell)+(j-k)} 
x^{i-\ell} y^{j-k} \\ 
&\qquad = g_{\kappa,\veps}\big(\tfrac xy,\,y\big)G(x,y).
\end{align*}
After rearrangement, we obtain the following partial differential equation:
\[
\left\{
\begin{array}{ll}
	\big[1+ b - g_{\kappa,\veps}\big(\tfrac xy,\,y\big)\big] G(x,y) + 
	c(x\, \partial_x G(x,y) + y\,\partial_y G(x,y) ) = 1;\\[6pt]
	G(0,y) = \frac{1}{1+b}\,.
\end{array}
\right.
\]
	
To solve this equation, we introduce the function $h(t) = G(tx, ty)$, which then satisfies 
the following linear ODE.
\begin{align*}
\left\{
\begin{array}{ll}
	\big[1+ b - g_{\kappa,\veps}\big(\tfrac xy,\,ty\big)\big] h(t) + c t h'(t) = 1;\\[6pt]
	h(0) = \frac{1}{1+b}\,.
\end{array}
\right.
\end{align*}
Such an ODE is a routine problem to solve, and its solution is
\[
h(t) = \frac{1}{c}\, t^{- \tfrac{1+b}{c}} \int_0^t u^{ \tfrac{1+b}{c} - 1 } 
\exp \bigg\{ \int_u^t g_{\kappa,\veps}\big(\tfrac xy,\,sy\big) \, ds \bigg\} du.
\]
The correspondence between $h(t)$ and $G(x,y)$ yields
\[
G(x,y) = h(1)= \frac{1}{c} \int_0^1 u^{ \tfrac{1+b}{c} - 1 } \exp \bigg\{ 
\int_u^1 g_{\kappa,\veps}\big(\tfrac xy,\,sy\big) \, ds \bigg\} du.
\]
	
Let us turn to the bivariate generating function $f$ we are interested in. Clearly,
\[
f\Big(\frac xy,\,y\Big)=\sum_{i=0}^{\infty} \sum_{j=0}^{i} \Pr(\pi'(\lambda)=i, 
\, \xi(\lambda) = i + j ) x^i y^j.
\]
The probability that an edge of biological age $j$ dies before the next reproduction 
event is 
\[
\frac{b + cj}{1 + b + cj}\,,
\]
therefore
\begin{align*}
\Pr(\pi'(\lambda)=i,\, &\xi(\lambda) = i + j )\\
&=\Pr( \exists \, t < \lambda : \pi'(t) = i, \, \xi(t) =i+ j )\,\frac{b + c(i+j)}
{1 + b + c(i+j)}\\
&=[b + c(i+j)]v_{i,i+j}.
\end{align*}
Consequently,
\begin{align*}
f\Big(\frac xy,\,y\Big)&=\sum_{i=0}^{\infty} \sum_{j=0}^{i}[b+c(i+j)]
v_{i, i+j} x^i y^j \\
&= b\, G(x,y) + c \big( x\,\partial_x G(x,y) + y\,\partial_y G(x,y) \big).
\end{align*}
As we have already seen,
\[
c \big(x\,\partial_x G(x,y) +y\,\partial_y G(x,y) \big)=1- \big[1+ b - g_{\kappa,\veps}
\big(\tfrac xy,\,y\big)\big] G(x,y),
\]
which implies
\[
f\Big(\frac xy,\,y\Big)= 1 -\big[1 - g_{\kappa,\veps}\big(\tfrac xy,\,y\big)\big] G(x,y).
\]
Finally, so as to get the generating function $f(x,y)$ we simply have to replace $x$ 
with $xy$. Then we conclude with
\[
f(x,y) = 1 - [1 -  g_{\kappa, \veps}(x,y)] G(xy,y).\qedhere
\]
\end{proof}

Substituting $1$ for $x$ we get the generating function of $\xi(\lambda)$. Note 
that $g_{\kappa,\veps}(1,y)=g_{\veps}(y)$.

\begin{corollary}
The probability generating function of $\xi(\lambda)$ is
\[
g_{\xi(\lambda)}(z)= 1 - \frac{1 -  g_{\veps}(z)}{c} 
\int_0^1 u^{ \tfrac{1+b}{c} - 1 } \exp\bigg\{ \int_u^1 g_{\veps}(sy)\,ds 
\bigg\} \, du.\tag*{\qed}
\]
\end{corollary}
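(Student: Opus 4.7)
The plan is to obtain the corollary as a direct specialization of Lemma \ref{genfunc} by substituting $x = 1$ in the bivariate generating function $f(x,y)$. First I would observe that setting $x = 1$ collapses $f$ to the marginal generating function of $\xi(\lambda)$:
\[
f(1,y) = \E\bigl(1^{\pi'(\lambda)}\, y^{\xi(\lambda)}\bigr) = \E\bigl(y^{\xi(\lambda)}\bigr) = g_{\xi(\lambda)}(y),
\]
which is the left-hand side of the corollary (after renaming the dummy variable $y$ as $z$).

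Next I would use the elementary identity $g_{\kappa,\veps}(1, y) = \E(1^{\kappa} y^{\veps}) = g_\veps(y)$, and likewise $g_{\kappa,\veps}(1, sy) = g_\veps(sy)$ for every $s \in [0,1]$. Plugging $x = 1$ into the formula of Lemma \ref{genfunc} thus turns both the prefactor $1 - g_{\kappa,\veps}(x,y)$ and the integrand $g_{\kappa,\veps}(x, sy)$ into their univariate counterparts in $g_\veps$, yielding exactly the claimed expression for $g_{\xi(\lambda)}(z)$.

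Since the corollary is a one-line substitution into an already established identity, there is no serious obstacle to overcome; all the real work sits in the derivation of Lemma \ref{genfunc}. As a consistency check one can verify that $g_{\xi(\lambda)}(1) = 1$: indeed $g_\veps(1) = 1$ forces the factor $1 - g_\veps(z)$ to vanish at $z = 1$, so the entire integral term disappears and the formula returns $1$, as required of any probability generating function.
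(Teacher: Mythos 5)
Your proposal is correct and matches the paper's own (implicit) argument exactly: the authors introduce the corollary with the remark ``Substituting $1$ for $x$ we get the generating function of $\xi(\lambda)$. Note that $g_{\kappa,\veps}(1,y)=g_{\veps}(y)$,'' which is precisely your one-line substitution into Lemma~\ref{genfunc}. Your renaming of the dummy variable also quietly fixes the paper's typo of leaving $g_\veps(sy)$ instead of $g_\veps(sz)$ in the displayed formula.
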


Turning back to the theory of general branching proesses, we can determine
the probability that our random cherry tree eventually dies out, when the reproduction 
mean  $\E(\xi(\lambda))$ is greater than $1$.

\begin{corollary}
Assume that \eqref{supercritical} holds. Then the probability that the random 
cherry tree process becomes extinct is equal to the smallest nonnegative root 
of the equation
\[
\frac{1 -  g_{\veps}(z)}{c} \int_0^1 u^{ \tfrac{1+b}{c} - 1 } \exp\bigg\{ 
\int_u^1 g_{\veps}(sy)\,ds \bigg\} du = 1 - z.
\]
\end{corollary}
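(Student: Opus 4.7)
The plan is to assemble the statement directly from two facts already recorded in the excerpt. First, Subsection 3.2 notes that the extinction probability of the continuous random cherry tree coincides with that of the embedded Galton--Watson process whose offspring distribution is $\xi(\lambda)$, and is therefore the smallest nonnegative root of the fixed-point equation $g_{\xi(\lambda)}(z) = z$. Second, the preceding corollary supplies an explicit closed form for $g_{\xi(\lambda)}(z)$. So the work reduces to an algebraic rearrangement of $g_{\xi(\lambda)}(z) = z$ into the equation displayed in the statement, together with a brief check that ``smallest nonnegative root'' is meaningful.

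Concretely, I would write
\[
A(z) := \frac{1 - g_{\veps}(z)}{c} \int_0^1 u^{(1+b)/c - 1} \exp\Big\{\int_u^1 g_{\veps}(sy)\,ds\Big\}\,du,
\]
so the preceding corollary says $g_{\xi(\lambda)}(z) = 1 - A(z)$. The equation $g_{\xi(\lambda)}(z) = z$ is then equivalent to $A(z) = 1 - z$, which is precisely the equation in the corollary. No calculation beyond this line-for-line substitution is needed.

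It remains to verify that the smallest nonnegative root lies in $[0,1]$ and equals the extinction probability. Note that $g_{\veps}(1) = 1$ makes $A(1) = 0$, so $z = 1$ is always a (trivial) root. Under the supercriticality hypothesis \eqref{supercritical}, we have $\E\xi(\lambda) > 1$, and by standard Galton--Watson theory the convex, nondecreasing function $g_{\xi(\lambda)}$ then has exactly one fixed point in $[0,1)$; this smaller root is the extinction probability. I expect no real obstacle: the only subtlety is the bookkeeping observation that the equation in the statement is genuinely equivalent to $g_{\xi(\lambda)}(z) = z$ rather than, say, to $1 - g_{\xi(\lambda)}(z) = 1 - z$ modulo spurious solutions, which the direct rearrangement above makes transparent.
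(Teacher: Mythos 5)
Your proposal is correct and follows exactly the paper's own route: the paper's proof is the one-line observation that the extinction probability is the smallest nonnegative root of $g_{\xi(\lambda)}(z)=z$, combined with the closed form for $g_{\xi(\lambda)}$ from the preceding corollary. Your additional remarks on convexity and the location of the root are standard Galton--Watson facts the paper leaves implicit, so nothing is missing.
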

\begin{proof}
The probability of extinction is the smallest nonnegative root of the fixpoint equation
$g_{\xi(\lambda)}(z) = z$.
\end{proof}

\subsection{Asymptotics of vertices}\label{ss3.3}

This section focuses on the vertices in the cherry tree. First we consider the number 
of vertices, then turn our attention to how the degree of a fixed vertex changes in
time. We deal with the former question in a similar way as in the proof of 
Proposition \ref{living_edges}, meanwhile the latter one needs a slightly more work.

\begin{proposition}
Let $V_n=|\mathcal{V}_n|$. Almost everywhere on the event of non-extinction,
\[
\lim_{n \to \infty} \frac{V_n}{n} =\frac{\E(\kappa)}{\E(\veps)+1-\alpha}\,.
\]
\end{proposition}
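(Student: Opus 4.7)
The plan is to repeat verbatim the strategy used for edges: represent $V(t)$ in the continuous model as $Z^{\phi}(t)$ for an appropriate random characteristic $\phi$, verify the hypotheses of Theorem \ref{Nerman}, and then apply \eqref{fracphipsi} with the event-counting characteristic $\psi(t)=\pi(t\wedge\lambda)+\ind_{\{\lambda\le t\}}$ already used in Theorem \ref{growthrate}. Comparing $V$ against $H$ rather than against $E$ is most convenient, since evaluating the continuous process at the $n$-th event time $t_n$ (where $V(t_n)=V_n$ and $H(t_n)=n$) then yields $V_n/n$ directly.

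Since vertices are never deleted, every edge $e$ retains the $\kappa_i$ new vertices it introduces at each of its reproduction events within its own lifetime. With the compound Poisson process $\pi'_e$ from the excerpt,
\[
V(t) = 2 + \sum_e \pi'_e\bigl((t-\sigma_e)\wedge\lambda_e\bigr) = 2 + Z^{\phi}(t), \qquad \phi(t)=\pi'(t\wedge\lambda).
\]
Conditions (i) and (ii) of Theorem \ref{Nerman} are immediate, while (iii) reads $\E\pi'(\lambda)<\infty$; by Wald's identity for the unit-intensity process $\pi$ this equals $\E(\kappa)\,\E(\lambda)$. Finiteness of $\E(\lambda)$ follows from \eqref{S}: because $\veps\ge 1$ forces $g_{\veps}(v)/v\le 1$ on $(0,1]$, the inner integral in \eqref{S} stays bounded as $t\to\infty$, so $S(t)$ decays exponentially.

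For the constants, Wald again gives $\E\phi(t)=\E(\kappa)\int_0^t S(u)\,du$, and the order-reversal argument of \eqref{kellenifog} combined with \eqref{intensity} yields
\[
\int_0^{\infty} e^{-\alpha t}\E\phi(t)\,dt = \frac{\E(\kappa)}{\alpha}\int_0^{\infty} e^{-\alpha u}S(u)\,du = \frac{\E(\kappa)}{\alpha\,\E(\veps)}.
\]
The corresponding integral for $\psi$ was computed inside the proof of Theorem \ref{growthrate} to equal $(\E(\veps)+1-\alpha)/(\alpha\,\E(\veps))$. By \eqref{fracphipsi} their quotient equals $\E(\kappa)/(\E(\veps)+1-\alpha)$, giving $\lim_{t\to\infty} V(t)/H(t)=\E(\kappa)/(\E(\veps)+1-\alpha)$ on non-extinction, and evaluation at the event times $t_n$ transfers the limit to the discrete model.

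The only genuinely new point compared with the two preceding proofs is the integrability check $\E[\sup_t\phi(t)]<\infty$: here $\phi$ is unbounded (in Proposition \ref{living_edges} the supremum was $1$, and in Theorem \ref{growthrate} it was a Poisson count of finite mean). The step reduces to showing $\E(\lambda)<\infty$ and is handled by the brief estimate of $S(t)$ above; I expect no further obstacle.
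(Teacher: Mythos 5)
Your proof is correct and follows essentially the same route as the paper: the paper likewise writes $V(t)$ (up to the two initial vertices) as $Z^{\psi}(t)$ with $\psi(t)=\pi'(t\wedge\lambda)$, computes $\int_0^\infty e^{-\alpha t}\E\psi(t)\,dt=\E(\kappa)/(\alpha\E(\veps))$ via Wald and \eqref{Malthus}, and divides by the integral for the event-counting characteristic from Theorem \ref{growthrate}. The only difference is that you explicitly verify $\E[\sup_t\phi(t)]=\E(\kappa)\E(\lambda)<\infty$ via the exponential decay of $S$, a check the paper passes over in silence; this is a harmless (indeed welcome) addition, not a different argument.
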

\begin{proof}
It is obvious that
\[
\lim_{n \to \infty} \frac{V_n}{n} = \lim_{t \to \infty} \frac{V(t)}{H(t)}
\]
holds. Furthermore, from the proof of Theorem \ref{growthrate} we know that 
$H(t) = Z^{\phi}(t)$, where
\[
\phi(t) = \pi(t \wedge \lambda) + \ind_{\{ 0 \leq t < \lambda \}},
\]
for which
\[
\int_0^{\infty} e^{- \alpha t} \, \E\phi(t) \, dt = \frac{1}{\alpha} + 
\Big( \frac{1}{\alpha} - 1 \Big) \frac{1}{ \E (\veps)}=\frac{\E(\veps)+1-\alpha}
{\alpha\E(\veps)}\,.
\]
Hence, we only need to find a characteristic $\psi$ that counts the vertices in the 
graph and then compute the corresponding integral $\int_0^{\infty}e^{-\alpha t}
\,\E\psi(t)\,dt$. Set $\psi(t) = \pi'(\lambda \wedge t)$ (the compound Poisson
process $\pi'(\cdot)$ was introduced in Subsection \ref{ss3.2}), then $Z^{\psi}(t)$ 
is less than $V(t)$ by the two initial nodes, whose significance asymptotically 
vanishes. Now we get
\[
\E\psi(t) = \E(\kappa) \E\pi(t \wedge \lambda) = \E(\kappa) \int_0^t S(u) \, du,
\]
and consequently, 
\[
\int_0^{\infty} e^{- \alpha t} \E \psi(t) \, dt = \frac{\E(\kappa)}{\alpha} 
\int_0^{\infty} e^{- \alpha t} S(t) \, dt = \frac{1}{\alpha (1+p)},
\]
using \eqref{Malthus}. Using that  $\E(\veps)=(1+p)\E(\kappa)$, by Theorem \ref{Nerman} 
we obtain
\[
\lim_{t \to \infty}\frac{V(t)}{H(t)}=\frac{\E(\kappa)}{\E(\veps)+1-\alpha}\,,
\]
as requested.
\end{proof}

It is obvious (or, using the random characteristic $\xi(t) - \pi'(t)$, easy to check) that the 
asymptotic proportion of vertices born with two edges is equal to $p$. In this way we
can get the asymptotic number of triangles ever created in the graph. However, if we 
wanted to examine the number of \emph{living} triangles only, the resulting formula would be 
much more complicated.

Next, let us turn to the behaviour of the degree process of a fixed vertex. To handle this 
problem, we need to define a new branching process, closely related to the original one 
constructed on the edges, which only takes the edges joined to the given vertex into 
consideration. To this end, suppose that the fixed vertex is born with a single initial edge 
and introduce the following notations. 

Whenever an edge, joined to the vertex 
under consideration, gives birth to $\kappa$ cherries and semi-cherries, each can increase 
the degree of the monitored vertex by $1$; namely, a cherry will always increase, but a 
semi-cherry only with probability $1/2$. Let $\phi_i$ be equal to $2$ if the contribution of the 
$i$-th new vertex is a cherry, and $1$, if  it is a semi-cherry. Let $\gamma_1, \gamma_2, 
\dots$ be iid with conditional distribution
\[
P(\gamma_i = 1 | \varphi_i = 2) = 1; \quad P(\gamma_i = 1 | \varphi_i = 1).
\]
Then the increase of the degree at a birth event is 
\[
\delta = \sum_{i=1}^{\kappa} \gamma_i.
\]
Introduce the notation
\begin{align*}
	\Delta_n = \sum_{i = 1}^{n} \delta_i,
\end{align*}
where the random variables $\delta_i$ are iid copies of $\delta$. Then the reproduction 
process of the monitored vertex's degree is $\eta(t) = \Delta_{\pi(t \wedge \lambda)}$, 
where $(\pi(t))_{t \geq 0}$ and $\lambda$ are the same as before. Note that the biological 
age of an edge still grows by every birth, even though it is not considered in the degree process.

In order to find the condition of supercriticality, and formulate the equation for the Malthusian 
parameter, we can argue as follows. By the definition, $\E(\gamma)=\E(\phi)/2$, $\E(\delta) 
= \E(\veps)/2$, and as a consequence we have
\[
\E \eta(\lambda) = \E(\Delta_{\pi(\lambda)}) = \E(\delta)\E(\lambda) = \frac{1}{2}\,
\E(\veps) \E(\lambda) = \frac{1}{2}\,\E \xi(\infty).
\]
Referring to \eqref{supercritical}, one can immediately see that the degree process is supercritical 
if $\E \eta(\infty) = \E \xi(\infty)/2 > 1$, that is, 
\[
\frac{ \E (\veps)}{c} \int_0^1 u^{\tfrac{1+b}{c}-1} \exp \bigg\{ \frac{1}{c} \int_u^1 
\frac{g_{\veps}(v)}{v} \, dv \bigg\}du > 2.
\]
Similarly, the intensity measure of the degree process of a fixed vertex is just the half of the edge 
process' intensity measure, from which it follows that the Malthusian parameter $\beta > 0$ of the 
former one satisfies the equation
\[
\frac{ \E (\veps)}{c} \int_0^1 u^{\tfrac{\beta+1+b}{c}-1} \exp \bigg\{
\frac{1}{c} \int_u^1 \frac{g_{\veps}(v)}{v} \, dv \bigg\}du = 2.
\]
It is clear that $\beta<\alpha$.

For the probability that the vertex becomes isolated, i.e., the corresponding degree process extincts, 
we need to compute the probability generating function of $\eta(\infty)$, for which we can use
Lemma \ref{genfunc}. In fact, this is only valid in the case when the initial degree og the vertex is $1$.
When the initial degree of the observed vertex is $2$, its degree process is the superposition of two 
independent copies of $(\eta(\cdot), \lambda)$.

\begin{proposition}
The probability generating function of $\eta(\lambda)$ is
\[
1 - \frac{1 -  g_{\kappa, \veps} \big( \tfrac{(1+z)^2}{4z}, \tfrac{2z}{1+z} \big) }{c} 
\int_0^1 u^{ \tfrac{1+b}{c} - 1 } \exp \bigg\{ \int_u^1 g_{\kappa, \veps}
\big( \tfrac{(1+z)^2}{4z}, \tfrac{2sz}{1+z}\big) \,ds \bigg\} du.
\]
Hence, if $\E \eta(\lambda) > 1$, the probability that a fixed vertex eventually gets isolated is equal to 
$pz^2 + (1-p)z$, where $z$ is the smallest positive root of the equation
\begin{align*}
\frac{1 -  g_{\kappa, \veps} \big( \tfrac{(1+z)^2}{4z}, \tfrac{2z}{1+z} \big) }{c} 
\int_0^1 u^{ \tfrac{1+b}{c} - 1 } \exp \bigg\{ \int_u^1 g_{\kappa, \veps}\big( \tfrac{(1+z)^2}{4z}, 
\tfrac{2sz}{1+z} \big) \, ds \bigg\}du = 1-z.
\end{align*}
\end{proposition}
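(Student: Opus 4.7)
The plan is first to derive the probability generating function of $\eta(\lambda)$ by a careful substitution into Lemma \ref{genfunc}, and then to read off the isolation probability from the embedded Galton--Watson extinction equation.

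For the first step, I write $\eta(\lambda)=\sum_{i,j}\gamma_{ij}$, where $i$ indexes the reproduction events of the parent edge before its death and $j=1,\dots,\kappa_i$ enumerates the vertices born at the $i$-th event. Given $\vp_{ij}$, the variable $\gamma_{ij}$ equals $1$ deterministically in the cherry case and is Bernoulli$(1/2)$ in the semi-cherry case, so $\E(z^{\gamma_{ij}}\mid\vp_{ij}=2)=z$ and $\E(z^{\gamma_{ij}}\mid\vp_{ij}=1)=(1+z)/2$. The key move is to choose $(x,y)$ so that the single expression $xy^{\vp_{ij}}$ reproduces \emph{both} conditional moments: the system $xy^2=z$, $xy=(1+z)/2$ has the unique solution $y=2z/(1+z)$, $x=(1+z)^2/(4z)$, exactly the substitution appearing in the proposition. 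Conditional on the skeleton $(\pi,\lambda,\{\kappa_i\},\{\vp_{ij}\})$ the $\gamma_{ij}$'s are mutually independent with $\E(z^{\gamma_{ij}}\mid\vp_{ij})=xy^{\vp_{ij}}$ by construction, so
\[
\E\bigl(z^{\eta(\lambda)}\,\big|\,\mathrm{skeleton}\bigr)=\prod_{i,j}xy^{\vp_{ij}}=x^{\pi'(\lambda)}y^{\xi(\lambda)}.
\]
Taking expectations, $g_{\eta(\lambda)}(z)=\E\bigl(x^{\pi'(\lambda)}y^{\xi(\lambda)}\bigr)=f(x,y)$, and Lemma \ref{genfunc} immediately yields the closed form stated in the proposition.

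For the second part, the edges incident to the fixed vertex form a general branching process in which each edge produces $\eta(\lambda)$ further edges incident to that vertex during its lifespan; its extinction coincides with that of the embedded Galton--Watson chain with offspring variable $\eta(\lambda)$. Hence, starting from a single initial edge, the extinction probability is the smallest nonnegative root of $g_{\eta(\lambda)}(z)=z$, which upon rearrangement is precisely the displayed equation. Since a fixed vertex is born either as a semi-cherry (probability $1-p$, initial degree $1$) or as a cherry (probability $p$, initial degree $2$), and in the cherry case its two initial edges launch independent copies of the degree process, the overall isolation probability is $(1-p)z+pz^2=pz^2+(1-p)z$.

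The main technical point is the insistence on matching \emph{conditional} rather than unconditional generating functions. A seemingly simpler recipe — setting $y=1$ and $x=\E(z^{\gamma})=(1-p)/2+(1+p)z/2$ — would average out $\vp$ before invoking Lemma \ref{genfunc} and is incorrect, because $\lambda$ itself depends on the $\vp_{ij}$'s through the hazard rate $b+c\xi(t)$, so they cannot be integrated away independently of the skeleton. The substitution $x=(1+z)^2/(4z)$, $y=2z/(1+z)$ sidesteps this by encoding the extra $\gamma$-randomness into the joint $(\pi',\xi)$-bookkeeping that Lemma \ref{genfunc} already handles.
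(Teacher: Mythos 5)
Your proposal is correct and follows essentially the same route as the paper: both arguments reduce $\E\bigl(z^{\eta(\lambda)}\bigr)$ to $\E\bigl(x^{\pi'(\lambda)}y^{\xi(\lambda)}\bigr)=f(x,y)$ with $x=\tfrac{(1+z)^2}{4z}$, $y=\tfrac{2z}{1+z}$ and then invoke Lemma \ref{genfunc}, the only cosmetic difference being that the paper conditions on $(\pi'(\lambda),\xi(\lambda))$ and uses the resulting Binomial$(2\ell-k,1/2)$ law, while you condition on the full skeleton and factor vertex by vertex. Your handling of the initial degree ($1$ with probability $1-p$, $2$ with probability $p$, giving $(1-p)z+pz^2$) matches the paper's remark preceding the proposition.
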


\begin{proof}
By the law of total expectation,
\[
g_{\eta(\lambda)}(z) = \E \big(z^{ \eta(\lambda)} \big) = \E \big[ \E \big( z^{ \eta(\lambda) } 
\big| \pi'(\lambda), \, \xi(\lambda) \big) \big],
\]
therefore it is sufficient to deal with the conditional expectation
\[
\E \big( z^{ \eta(\infty) } \big| \pi'(\lambda) = \ell, \, \xi(\lambda) = k \big)
\]
where $0\le\ell\le k\le 2\ell$. Clearly, if $k$ new edges are added with $\ell$ new vertices, then the 
increment is composed of $k-\ell$ cherries and $2\ell-k$ semi-cherries. Thus the conditional 
distribution of $\eta(\lambda)-(k-\ell)$ is Binomial$(2\ell-k,\,1/2)$. Hence,
\[
\E \big( z^{\eta(\lambda)} \big| \pi'(\lambda) =\ell, \xi(\lambda) = k \big) = z^{k-l} 
\Big( \frac{1+z}{2} \Big)^{2\ell -k}.
\]
Consequently, by Lemma \ref{genfunc} we get
\begin{align*}
& g_{\eta(\lambda)}(z) = \E \bigg[ z^{\xi(\lambda)-\pi'(\lambda)} \Big(\frac{1+z}{2} 
\Big)^{2\pi'(\lambda) -\xi(\lambda)} \bigg]\\
&\quad = f \bigg( \frac{(1+z)^2}{4z}, \frac{2z}{1+z} \bigg)\\ 
&\quad = 1 - \frac{1 -  g_{\kappa, \veps} \big( \tfrac{(1+z)^2}{4z}, \tfrac{2z}{1+z} \big) }{c} 
\int_0^1 u^{ \tfrac{1+b}{c} - 1 } \exp \bigg\{ \int_u^1 g_{\kappa, \veps}
\big( \tfrac{(1+z)^2}{4z}, \tfrac{2sz}{1+z} \big) ds \bigg\} du,
\end{align*}
completing the proof.
\end{proof}

\subsection{Further properties}

The first proposition of the section is about the asymptotic proportion of living edges 
without any descendants.
\begin{proposition}\label{no_descendants}
Let  us denote the number of childless edges after $n$ steps by $O_n$, then
\[
\lim_{n \to \infty} \frac{O_n}{E_n} = \frac{ \E (\veps)}{1+b+\alpha}
\]
almost everywhere on the event of non-extinction. 
\end{proposition}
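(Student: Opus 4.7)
The plan is to derive the asymptotic proportion in the continuous model and then transfer it to discrete time. Since the discrete process is obtained from the continuous one by sampling at the event times $t_n$, and on non-extinction $t_n\to\infty$, we have $O_n/E_n=O(t_n)/E(t_n)$, so it suffices to prove that
\[
\lim_{t\to\infty}\frac{O(t)}{E(t)}=\frac{\E(\veps)}{1+b+\alpha}
\]
almost everywhere on non-extinction, where $O(t)$ counts the number of living edges at time $t$ that have produced no children so far.

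The key observation is that $O(t)=Z^\phi(t)$ for the random characteristic
\[
\phi(t)=\ind_{\{0\le t<\lambda,\ \pi(t)=0\}}=\ind_{\{0\le t<\lambda\wedge\tau_1\}},
\]
where $\tau_1$ is the first birth time of the edge. This characteristic is nonnegative, bounded by $1$, and càdlàg, so all hypotheses of Theorem \ref{Nerman} are trivially satisfied. For $E(t)=Z^\psi(t)$ with $\psi(t)=\ind_{\{0\le t<\lambda\}}$ we have already computed in the proof of Proposition \ref{living_edges} (via \eqref{intensity}) that $\int_0^\infty e^{-\alpha t}\E\psi(t)\,dt=\int_0^\infty e^{-\alpha t}S(t)\,dt=1/\E(\veps)$. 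Hence by \eqref{fracphipsi} the limit equals
\[
\frac{\int_0^\infty e^{-\alpha t}\E\phi(t)\,dt}{1/\E(\veps)}=\E(\veps)\int_0^\infty e^{-\alpha t}\E\phi(t)\,dt.
\]

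The main computation is thus $\E\phi(t)=\Pr(\lambda>t,\ \tau_1>t)$. Here is where the only real subtlety lies: $\lambda$ is \emph{not} independent of the reproduction process $\pi$, so one cannot just multiply marginals. However, on the event $\{\tau_1>t\}$ one has $\xi(s)=0$ for $s\le t$, and the conditional survival formula given in the definition of the continuous model yields $\Pr(\lambda>t\mid\tau_1>t)=\exp(-\int_0^t b\,ds)=e^{-bt}$. Combined with $\Pr(\tau_1>t)=e^{-t}$ this gives $\E\phi(t)=e^{-(1+b)t}$, so $\int_0^\infty e^{-\alpha t}\E\phi(t)\,dt=1/(1+b+\alpha)$, and plugging in produces the claimed value $\E(\veps)/(1+b+\alpha)$.

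The main obstacle is exactly this careful conditioning step, making sure one does not mistakenly treat the death clock and the Poisson process as independent. Everything else consists of direct application of Theorem \ref{Nerman} together with the Malthusian identity \eqref{intensity}.
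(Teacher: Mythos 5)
Your proposal is correct and follows essentially the same route as the paper: the same random characteristic $\phi(t)=\ind_{\{0\le t<\lambda\wedge\tau_1\}}$, the same computation $\E\phi(t)=e^{-(1+b)t}$, and the same application of \eqref{fracphipsi} together with \eqref{intensity}. Your explicit conditioning on $\{\tau_1>t\}$ to handle the dependence between $\lambda$ and the reproduction process is just a slightly more careful phrasing of the paper's observation that the hazard rate is constant $b$ before the first birth event.
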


\begin{proof}
Similarly to what we did in the proofs of previous results, we will introduce the notation $O(t)$ 
for the number of living childless edges at time $t$ in the continuous time model, and since
\[
\lim_{t \to \infty} \frac{O_n}{E_n} = \lim_{n \to \infty} \frac{O(t)}{E(t)}
\]
holds, we can rely on Proposition \ref{living_edges} and Theorem \ref{Nerman} with the 
appropriate random characteristic. It is easy to see that the right choice is 
\[
\phi(t) = \ind_{ \{ 0 \leq t < \tau_1\wedge\lambda \} },
\]
where $\tau_1$ is the first birth time in the Poisson process $(\pi(t))_{t \geq 0}$.
	
To calculate the corresponding limit fraction, we first need to compute the mean $\E \phi(t)
= \Pr(\tau_1\wedge\lambda > t )$.
The distribution of $\tau_1$ is exponential with mean $1$, and up to $\tau_1$, the hazard 
rate of the edge lifetime is constant $b$, that is, $\lambda$ behaves like an exponenetial 
random variable, which is independent of $\tau_1$. Therefore $\tau_1\wedge\lambda$ is 
exponentially distributed with parameter $1+b$,  thus $\E\phi(t)=e^{-(1+b)t}$.

By Theorem \ref{Nerman} we have
\[
\lim_{t \to \infty} \frac{O(t)}{E(t)} =\frac{\int_0^{\infty} e^{- \alpha t} e^{-(1+b)t} \, dt}
{\int_0^{\infty}e^{-\alpha t} S(t)\,dt} = \frac{ \E (\veps)}{1 + b + \alpha}\,.
\qedhere
\]
\end{proof}

So far we could easily utilize the direct connection between the discrete and the continuous 
models. The next example will show a case where the transfer of results is not so 
straightforward.

Consider the continuous cherry tree and define 
\[
T(t)=\big|\{e: \sigma_e\le t\}\big|;
\]
this is the number of edges born up to time $t$, irrespectively that they are still present or already 
deleted. Moreover, let
\[
J(t)=\int_0^t E(s)\,ds,
\]
where $E(s)$ is the number of living edges at time $s$. Clearly,
\[
J(t)=\int_0^t \sum_e \ind_{\{\sigma_e\le s<\sigma_e+\lambda_e\}}\,ds=
\sum_e\int_0^t  \ind_{\{\sigma_e\le s<\sigma_e+\lambda_e\}}\,ds= 
\sum_e (t-\sigma_e)^+\wedge \lambda_e,
\]
thus $J(t)$ is the sum of the lengths of time the edges spent in the graph up to time $t$. In the statistical
analysis of survival data this quantity is called \emph{the total time on test}. The summands can also 
be considered a censored sample from the lifetime distribution $\lambda$, hence the mean lifetime
$\E(\lambda)$ can be estimated by $\hat\lambda_1(t)=J(t)/T(t)$. As a result of censoring, this 
estimation is underbiased. One might reduce the bias by leaving censored observations out of 
consideration. This leads to the estimator $\hat\lambda_2(t)=\widetilde J(t)/\widetilde T(t)$, where
\[
\widetilde J(t)=\sum_{e\,:\, \sigma_e+\lambda_e\le t} \lambda_e,\qquad 
\widetilde T(t)=\big|\{e: \sigma_e+\lambda_e\le t\}\big|.
\]
We should remark that this second estimator is still underbiased, because the exponential 
growth of the continuous cherry tree imples that a non-negligible proportion of the edges born so 
far entered the graph in the recent past, and they are only counted if died at an unusally young age. 

Let us compute their limits as $t\to\infty$. 
\begin{proposition}
Almost everywhere on the event of non-extinction, 
\begin{align*}
\lim_{t\to\infty}\hat\lambda_1(t)=\frac{1}{\E(\veps)}\,, \quad 
\lim_{t\to\infty}\hat\lambda_2(t)=\frac{1-\alpha \E(\veps)\int_0^\infty t\, e^{-\alpha t}S(t)\,dt)}
{\E(\veps)-1}\,.
\end{align*}
\end{proposition}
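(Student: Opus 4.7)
The plan is to apply the ratio limit \eqref{fracphipsi} of Theorem \ref{Nerman} twice, once for each of the two ratios, by identifying each of $T(t)$, $J(t)$, $\widetilde T(t)$, $\widetilde J(t)$ as a branching process counted by a suitable random characteristic. A preliminary ingredient common to both parts is that the closed form \eqref{S} implies $S(t)=O(e^{-(1+b)t})$ as $t\to\infty$, whence $\E(\lambda)=\int_0^\infty S(t)\,dt<\infty$; this finiteness is what allows the characteristics involving $\lambda$ to satisfy condition (iii) of Theorem \ref{Nerman}.

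For $\hat\lambda_1$, I take
\[
\phi_T(t)=\ind_{\{t\ge 0\}},\qquad \phi_J(t)=(t\wedge\lambda)\,\ind_{\{t\ge 0\}},
\]
which, together with the representation $J(t)=\sum_e (t-\sigma_e)^+\wedge\lambda_e$ already derived in the text, give $T(t)=Z^{\phi_T}(t)$ and $J(t)=Z^{\phi_J}(t)$. The Nerman conditions are routine: boundedness of $\sup_t\phi_T=1$ is trivial, and $\sup_t\phi_J=\lambda\in L^1$ by the remark above. The denominator integral is $\int_0^\infty e^{-\alpha t}\,dt=1/\alpha$, while the numerator integral is exactly \eqref{kellenifog}, which together with \eqref{intensity} equals $1/(\alpha\E(\veps))$. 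Their quotient is $1/\E(\veps)$.

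For $\hat\lambda_2$, the correct characteristics are
\[
\phi_{\widetilde T}(t)=\ind_{\{t\ge\lambda\}},\qquad \phi_{\widetilde J}(t)=\lambda\,\ind_{\{t\ge\lambda\}},
\]
so that $\widetilde T(t)=Z^{\phi_{\widetilde T}}(t)$ and $\widetilde J(t)=Z^{\phi_{\widetilde J}}(t)$, with the three Nerman conditions again following from $\E(\lambda)<\infty$. For the denominator one gets
\[
\int_0^\infty e^{-\alpha t}(1-S(t))\,dt=\frac{1}{\alpha}-\frac{1}{\alpha\E(\veps)}=\frac{\E(\veps)-1}{\alpha\E(\veps)},
\]
using \eqref{intensity}. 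For the numerator, Fubini turns it into $\frac{1}{\alpha}\E[\lambda e^{-\alpha\lambda}]$, and an integration by parts (writing $dF_\lambda=-dS$) yields
\[
\E[\lambda e^{-\alpha\lambda}]=\int_0^\infty e^{-\alpha t}S(t)\,dt-\alpha\int_0^\infty t\, e^{-\alpha t}S(t)\,dt=\frac{1}{\E(\veps)}-\alpha\int_0^\infty t\, e^{-\alpha t}S(t)\,dt.
\]
Forming the ratio and simplifying gives the claimed expression for $\lim \hat\lambda_2(t)$.

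The computation is largely routine and I do not anticipate a genuine obstacle. The only place where mild care is needed is the integration by parts used to express $\E[\lambda e^{-\alpha\lambda}]$: one must verify that the boundary term $t\, e^{-\alpha t}S(t)$ vanishes at infinity, which is again immediate from the exponential tail of $S(t)$ supplied by \eqref{S}.
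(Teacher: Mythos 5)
Your proposal is correct and follows essentially the same route as the paper: the same four random characteristics $\ind_{\{t\ge 0\}}$, $(t\wedge\lambda)\ind_{\{t\ge0\}}$, $\ind_{\{\lambda\le t\}}$, $\lambda\ind_{\{\lambda\le t\}}$, the same application of \eqref{fracphipsi}, and the same use of \eqref{kellenifog} and \eqref{intensity}. The only cosmetic difference is that you evaluate $\int_0^\infty e^{-\alpha t}\E[\lambda\ind_{\{\lambda\le t\}}]\,dt$ via Fubini and integration by parts on $\E[\lambda e^{-\alpha\lambda}]$, whereas the paper writes $\E[\lambda\ind_{\{\lambda\le t\}}]=\int_0^t[S(s)-S(t)]\,ds$ and splits the integral; both give the same expression, and your explicit check of condition (iii) via $\E(\lambda)<\infty$ is a welcome (if routine) addition.
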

\begin{proof}
All four quantities can be expressed as $Z^{\phi}(t)$ by the help of suitable random characteristics
$\phi$ as Table \ref{tablazat} shows.
\begin{table}[h]\label{tablazat}
\centering
\setlength{\extrarowheight}{2pt}
\begin{tabular}{|l|l|}
\hline
$J(t)$ & $\phi_1(t)=(t\wedge\lambda)\,\ind_{\{t\ge 0\}}$ \\
$T(t)$ & $\phi_2(t)=\ind_{\{t\ge 0\}}$ \\
$\widetilde J(t)$ & $\phi_3(t)=\lambda\,\ind_{\{\lambda\le t\}}$ \\
$\widetilde T(t)$ & $\phi_4(t)=\ind_{\{\lambda\le t\}}$ \\
\hline
\end{tabular}
\caption{Statistics and the corresponding random characteristics}
\label{tablazat}
\end{table}

First, we have to compute $\E\phi_i(t)$, $i=1,2,3,4$, $t\ge 0$.
\begin{align*}
\E\phi_1(t)&=\E(t\wedge\lambda)=\int_0^t S(s)\,ds, &
\E\phi_2(t)&=1, \\
\E\phi_3(t)&=\E(\lambda\,\ind_{\{\lambda\le t\}})=\int_0^t [S(s)-S(t)]\,ds, &
\E\phi_4(t)&=1-S(t).
\end{align*}
Therefore, by \eqref{kellenifog} and \eqref{intensity} we have
\[
\int_0^\infty e^{-\alpha t}\E\phi_1(t)\,dt=\frac{1}{\alpha}\int_0^\infty e^{-\alpha t}S(t)\,dt
=\frac{1}{\alpha  \E (\veps)}\,.
\]
Obviously,
\begin{align*}
\int_0^\infty e^{-\alpha t}\E\phi_2(t)\,dt &=\frac{1}{\alpha},\\
\int_0^\infty e^{-\alpha t}\E\phi_4(t)\,dt &=\int_0^\infty e^{-\alpha t}(1-S(t))\,dt
=\frac{1}{\alpha} - \frac{1}{\alpha  \E (\veps)}\,,
\end{align*}
and
\begin{align*}
\int_0^\infty e^{-\alpha t}\E\phi_3(t)\,dt &=\int_0^\infty e^{-\alpha t}\int_0^t 
[S(s)-S(t)]\,ds\,dt \\
&= \int_0^\infty e^{-\alpha t}\E\phi_1(t)\,dt - \int_0^\infty t\, e^{-\alpha t} S(t)\,dt\\
&=\frac{1}{\alpha  \E (\veps)}-\int_0^\infty t\, e^{-\alpha t} S(t)\,dt\\
&=\frac{1}{\alpha  \E (\veps)}\Bigg(1-\alpha \int_0^\infty t\, e^{-\alpha t}\mu(dt)\Bigg),
\end{align*}
using that $\E(\veps)S(t)\,dt=\mu(dt)$. Application of Theorem \ref{Nerman} will complete the proof.
\end{proof}
\begin{remark}
Unfortunately, the last integral cannot be transformed into a closed form, but
 we find the following connection between $T(t)$ and $\widetilde J(t)$:
\[
\lim_{t\to\infty} e^{-\alpha t}T(t)=Y_\infty+\E(\veps) \lim_{t\to\infty}e^{-\alpha t}\widetilde J(t)
\]
almost everywhere on the event of non-extinction.
\end{remark}

Next, let us turn to the discrete time cherry tree. Though the number $T_n$ of edges born up to time $t$
is of order $n$, the total time on test statistic $J_n=E_0+E_1+\dots+E_n$ exhibits a completely
different behavour. By Corollary \ref{edges} we have
\begin{equation}\label{J_n}
J_n\sim\frac{\alpha}{\E(\veps)+1-\alpha}\sum_{i=0}^n i\sim\frac{\alpha n^2}{2(\E(\veps)+1-\alpha)}\,,
\end{equation}
thus $J_n/T_n$ tends to infinity on the event of non-extinction. This is not at all surprising, because
the correspondence of the discrete and continuous time models is based on a time transform, 
by which the discrete time model is a slowed down version of the continuous one. The later an edge
is born, the longer its life will last. If we want to infer from a continuous counterpart, time has to be 
measured by the number of events; that is,  instead of $J(t)$ we should use $\int_0^t E(s)\,dH(s)$.
Using Corollary \ref{edges} and integrating by parts we get
\begin{align*}
\int_0^t E(s)\,dH(s)&\sim \int_0^t e^{-\alpha(t-s)}E(t)\,dH(s)\\
&=E(t)\bigg(\Big[e^{-\alpha(t-s)}H(s)\Big]_{s=0}^t-\int_0^t \alpha e^{-\alpha(t-s)}H(s)\,ds\bigg)\\
&\sim E(t)\bigg(H(t)-\int_0^t \alpha e^{-2\alpha(t-s)}H(t)\,ds\bigg)\\
&\sim\tfrac{1}{2} E(t)H(t),
\end{align*}
which already corresponds to \eqref{J_n}.
 
In the discrete model it seems more adequate to measure an edge's lifetime by the number of birth events,
which is not affected by time transformations. The mean number of litters during the life of an edge can be 
esimated by the statistic $B_n/T_n$, where $B_n$ is the number of reproduction events in the
first $n$ steps. The corresponding quantity in the continuous model can be counted by 
the random characteristic  $\phi(t)=\pi(t\wedge\lambda)$. Hence,
\[
\lim_{n\to\infty}\frac{B_n}{T_n}=\lim_{t\to\infty}\frac{\int_0^t e^{-\alpha t}\E(t\wedge\lambda)\,dt}
{\int_0^\infty e^{-\alpha t}\,dt}=\frac{1}{\E(\veps)};
\]
this coincides with the limit of $\hat\lambda_1(t)$.

\section*{Acknowledgement}
This project was supported by the European Union, co-financed by the European Social Fund
(EFOP-3.6.3-VEKOP-16-2017-00002). T.\ F.\ M\'ori was also supported by the Hungarian
National Research, Development and Innovation Office NKFIH (K 125569).

\vspace{-3ex}

\noindent\textbf{Tam\'as F. M\'ori}\\
Department of Probability Theory and Statistics,\\
ELTE E\"otv\"os Lor\'and University,\\
P\'azm\'any P. s. 1/C, H-1117 Budapest, Hungary\\
\textit{e-mail:} \texttt{mori@math.elte.hu}

\smallskip
\noindent\textbf{S\'andor Rokob}\\
Department of Probability Theory and Statistics,\\
ELTE E\"otv\"os Lor\'and University,\\
P\'azm\'any P. s. 1/C, H-1117 Budapest, Hungary\\
\textit{e-mail:} \texttt{rodnasbokor@gmail.com}


\begin{thebibliography}{99}
	\setlength{\parskip}{-3pt}\vspace{-2ex}
	
	\bibitem{BackhauszMori1}
	\textbf{Backhausz, \'A., and M\'ori, T.\ F.,}
	A random graph model based on $3$-interactions.
	\textit{Annales Univ.\ Sci.\ Budapest., Sect.\ Comp.}, \textbf{36} (2012), 41--52.

	\bibitem{BackhauszMori2}
	\textbf{Backhausz, \'A., and M\'ori, T.\ F.,}
	Weights and degrees in a random graph model based on $3$-interactions. 
	\textit{Acta Math.\ Hungar.}, \textbf{143} (2014), 23--43.

	\bibitem{BukszarPrekopa}
	\textbf{Buksz{\'a}r, J., and Pr{\'e}kopa, A.,}
	Probability bounds with cherry trees.
	\textit{Mathematics of Operations Research}, \textbf{26} (2001), 174--192.

	\bibitem{Cohn}
	\textbf{Cohn, H.,}
	A martingale approach to supercritical (CMJ) branching processes.
	\textit{Ann.\ Probab.}, (1985), 1179--1191.
	
	\bibitem{CrumpModeI}
	\textbf{Crump, K. S., and Mode, C. J.,}
	A general age-dependent branching process. I
	\textit{J.\ Math.\ Anal.\ Appl.}, \textbf{24} (1968), 494--508.
	
	\bibitem{CrumpModeII}
	\textbf{Crump, K. S., and Mode, C. J.,}
	A general age-dependent branching process. II
	\textit{J.\ Math.\ Anal.\ Appl.}, \textbf{25} (1969), 8--17.

	\bibitem{FazekasNoszalyPerecsenyi}
	\textbf{Fazekas, I., S.  Nosz\'aly, S., and Perecs\'enyi, A.,}
	Weights of cliques in a random graph model based on three-interactions.
	\textit{Lithuanian Math.\ J.}, \textbf{55} (2015), 207-–221.

	\bibitem{Haccouetal}
	\textbf{Haccou, P., Jagers, P., and Vatunin, V.\ A.,}
	\textit{Branching Processes: Variation, Growth, and Extinction of
		Populations}, Cambridge University Press, 2005.
	
	\bibitem{Jagers}
	\textbf{Jagers, P.,} 
	\textit{Branching Processes with Biological Applications}, 
	Wiley, London, 1975.
	
	\bibitem{Mori}
	\textbf{M{\'o}ri, T.\ F.,}
	Random multitrees.
	\textit{Studia Sci. Math. Hungar.}, \textbf{47} (2010), 59–-80.
	
	\bibitem{MoriRokob1}
	\textbf{M{\'o}ri, T.\ F., and Rokob, S.,}
	A random graph model driven by time-dependent branching dynamics.
	\textit{Annales Univ.\ Sci.\ Budapest., Sect.\ Comp.}, \textbf{46} (2017), 191-–213.
	
	\bibitem{MoriRokob2}
	\textbf{M{\'o}ri, T.\ F., and Rokob, S.,}
	Further properties of a random graph model driven by time-dependent branching dynamics.
	\textit{Annales Univ.\ Sci.\ Budapest., Sect.\ Comp.}, \textbf{47} (2018).
	
	\bibitem{Nerman}
	\textbf{Nerman, O.,}
	On the convergence of supercritical general (C-M-J) branching processes.
	\textit{Z.\ Wahrscheinlichkeit.}, \textbf{57} (1981), 365--395. 
	
	\bibitem{Worsley}
	\textbf{Worsley, K.\ J.,}
	An improved Bonferroni inequality and applications.
	\textit{Biometrika}, \textbf{69} (1982), 297--302.
	
\end{thebibliography}
\end{document}